\tikzstyle{block} = [rectangle, draw, fill=white!10,
\tikzstyle{line} = [draw, very thick, color=black!50, -latex']
\newtheorem{thm}{Theorem}[section]
\newtheorem{cor}[thm]{Corollary}
\newtheorem{lem}[thm]{Lemma}
\newtheorem{ex}[thm]{Example}
\theoremstyle{definition}
\newtheorem{defn}[thm]{Definition}
\theoremstyle{remark}
 \newtheorem{rem}[thm]{Remark}
\numberwithin{equation}{section}
\begin{document}
\title[GENERALIZATION OF $RAD$-$D_{11}$-MODULE]
{GENERALIZATION OF $RAD$-$D_{11}$-MODULE}%
\author{MAJID MOHAMMED ABED, ABD GHAFUR AHMAD and A. O. Abdulkareem}%
\address{School of mathematical Sciences, Faculty of Science and
Technology,Universiti Kebangsaan Malaysia,43600 UKM, Selangor
Darul Ehsan, Malaysia.}%
\email{m\_m\_ukm@ymail.com.(Corresponding author)}%
\email{ghafur@ukm.my}%
\email{afeezokareem@gmail.com}
\subjclass[2000] {54C05, 54C08, 54C10\\Corresponding author majid mohammed abed }%
\keywords{$Rad$-supplemented, $D_{3}$-module, $UC$-module, ($SSP$) property, $SIP$ property, indecomposable module.}%
\begin{abstract}
This paper gives generalization of a notion of supplemented module. Here, we utilize some algebraic properties like supplemented, amply supplemented and local modules in order to obtain the generalization. Other properties that are instrumental in this generalization are $D_{i}$, $SSP$ and $SIP$. If a module $M$ is $Rad$-$D_{11}$-module and has $D_{3}$ property, then $M$ is said to be completely-$Rad$-$D_{11}$-module ($C$-$Rad$-$D_{11}$-module). Similarly it is for $M$ with $SSP$ property. We provide some conditions for a supplemented module to be $C$-$Rad$-$D_{11}$-module.
\end{abstract}
\maketitle

\section{Introduction and Preliminaries}
  Throughout this paper all rings are unital and modules are considered to be right modules. A submodule $N$ of $M$ is small in $M$ ($N \ll M$) if for every submodule $L$ of $M$ with $N+L=M$, $L=M$. In [1], we have that any module $M$ is called hollow module if every proper submodule of $M$ is a small in $M$. The direct summand plays vital role in generalization of supplemented module. A submodule $N$ of $M$ is called supplement of $K$ in $M$ if $N$+$K$=$M$ and $N$ is minimal with respect to this property. A module $M$ is called supplemented if any submodule $N$ of $M$ has a supplement in $M$.\\

  In [2], Y. Talebi and A. Mahmoudi studied $C$-$Rad$-$D_{11}$-module through $Rad$-$\bigoplus$-supplemented modules and $D_{3}$ property to get $C$-$Rad$-$D_{11}$-module. Here we use other algebraic properties such as supplemented, amply supplemented and local modules to get generalization of $Rad$-$D_{11}$-module.\\

  Also summand sum property (SSP) and summand intersection property (SIP) are very important in generalization of supplemented module (see Definition 3.1). From [2], if $M$ is a $Rad$-$D_{11}$-module and has $SSP$ property, then $M$ is a $C$-$Rad$-$D_{11}$-module. If $M_{1}$ and $M_{2}$ are direct summands of $M$ with $M=M_{1}+M_{2}$ and $M_{1}\cap M_{2}$ is also direct summand of $M$ and so $M$ has $D_{3}$ property. Note that, the $D_{3}$-module with $Rad$-$D_{11}$-module already gives a $C$-$Rad$-$D_{11}$-module. There is another module called $T_{1}$-module which has relationship with $C$-$Rad$-$D_{11}$-module. If for every submodule $K$ of $M$ such that $M/K$ is isomorphic to a co-closed submodule of $M$ and every homomorphism $\mu:M\rightarrow M/K$ lifts to a homomorphism $\beta$:$M\rightarrow M$ in this case $M$ is called $T_{1}$-module [4]. If $M$ is a local module then it is a $C$-$Rad$-$D_{11}$-module. On the other hand, $M$ is $C$-$Rad$-$D_{11}$-module if it is projective, supplemented and has $D_{3}$ properties. There have been different notions of generalization of supplemented module conducted by many researchers. These generalizations are motivated by different properties of supplemented module. However, in this study we try to give another notion of generalization for supplemented module. It is interesting to note that several properties of supplemented module have been harnessed to give important properties of the generalization considered.\\
  
  In Section $2$, we give some properties of $C$-$Rad$-$D_{11}$-module. We proved that if $M$ is a projective and local module with $D_2$ property then it is a $C$-$Rad$-$D_{11}$-module. Necessary condition for a supplemented module to have a generalization of rank three is also given. In section $3$, we study three properties (injective, SSP and SIP) of supplemented module over $Rad$-$D_{11}$-module. An easy to follow proof of the consequence of each property is provided. Using unique closure (UC) and extending properties of a module, we give necessary and sufficient condition for $Rad$-$D_{11}$-module to be $C$-$Rad$-$D_{11}$-module 


\section{ALGEBRAIC PROPERTIES AND $C$-$RAD$-$D_{11}$ MODULE}

In this section, we utilize some algebraic properties in order to obtain rank three generalization of supplemented module. Let $M$ be an $R$-module. From [5], a module is said to be a $D_{i}$-module) if it satisfies $D_{i}$ ($i$=1, 2, 3) condition. A module $M$ is called $D_{1}$ if for every submodule $A$ of a module $M$, there is a decomposition $M=M_{1}\bigoplus M_{2}$ such that $M_{1}\in A$ and $A\cap M_{2}$ $\ll M_{2}$.\\
\noindent Equivalently, any module $M$ is called lifting if for all $N$ submodule of $M$ there is a decomposition 
$$M=H\bigoplus G\ni H\in N \ \mbox{and} \ N \cap H\leq M$$.\\
\noindent A module $M$ is called $D_{2}$ if $A\leq M$ such that $M$/$K$ is isomorphic to a summand of $M$ and this implies that $A$ is a summand of $M$.\\
For $N$ and $L$ are submodules of $M$, $L$ is a radical supplement ($Rad$-supplement) of $N$ in $M$ if
 $$N+L=M \ \mbox{and} \ N\cap L\ll Rad(L).$$
 
On the other hand from [6], $M$ is called $D_{11}$-module if every submodule of $M$ has supplement which is a direct summand of $M$. Therefore, any module $M$ is called a $Rad$-$D_{11}$-module if every submodule of $M$ has a $Rad$-supplement that is a direct summand of $M$. Also, a module $M$ is called semiperfect if every factor module of $M$ has a projective cover.
\begin{defn}
A module $M$ is called $C$-$Rad$-$D_{11}$-module if every direct summand of $M$ is $Rad$-$D_{11}$-module.
\end{defn}

 In [7], N.O. Ertas, gives the direct sum of additive Abelian groups $A\bigoplus B$, $A$ and $B$ are called direct summands. The map $$\alpha_{1} :A\rightarrow A \bigoplus B$$ defined by the rule $\alpha(a)= a\bigoplus 0$ is called the injection of the first summand and the map 
 $$p_{1}:A\bigoplus B\rightarrow A$$
\noindent defined by $p_{1}(a\bigoplus b)=a$ is called the projection onto the first summand. Similar maps $\alpha_{2}$, $p_{2}$ are defined for the second summand $B$. Equivalently, the direct sum of objects $A_{i}$ with $\alpha\in I$ is denoted by $A=\bigoplus A_{\alpha}$ and each $A_{\alpha}$ is called direct summand of $A$.
\begin{rem}
Let $A$ and $B$ be two direct summands of an Abelian group $S$ such that $A + B = S$. Then the intersection of $A$ and $B$ is not a direct summand of $S$. An example is given by
$$S =K_{2}\bigoplus K_{8}, \ A=\langle(1,1)\rangle \ \mbox{and} \ B=\langle(0,1)\rangle.$$
Then $S$ is the direct sum of $A$ and $\langle$(1,4)$\rangle$ and of $B$ and $\langle$(1,0)$\rangle$. The intersection is
$$\langle(0,2)\rangle\cong K_{4},$$
which is not a direct summand of S because $S$ is not isomorphic to $K_{4}\bigoplus K_{4}$ or to $K_{2}\bigoplus K_{2}\bigoplus K_{4}$.
The group generated by $A$ and $B$ is $S$ because it contains $(0,1)$ and $(1,0)=(1,1)-(0,1)$.
\end{rem}
From the definitions of $D_{2}$ and $D_{3}$ properties, it is obvious that $D_{2}$ implies $D_{3}$. Other properties like supplemented, amply supplemented and local modules are inherited by summands property. Also Rad-$D_{11}$-module take the same inherited property. It is a fact that there is an equivalence between supplemented and amply supplemented module. Finally, we say that:\\

 (*) If $M$ is a $Rad$-$D_{11}$-module with $D_{2}$ property then it is a $C$-$Rad$-$D_{11}$-module.

\begin{lem}
Let $M$ be an $R$-module. If $M$ has largest submodule then it is a supplemented module.
\end{lem}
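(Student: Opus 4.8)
The plan is to show that the existence of a largest submodule forces every proper submodule of $M$ to be small, so that $M$ is hollow, and then to exhibit an explicit supplement for each submodule. Denote by $L$ the largest submodule of $M$, so that $N\subseteq L$ for every proper submodule $N$ of $M$. The first step is to establish that every such $N$ is small in $M$. Suppose $N+K=M$ for some submodule $K$. If $K$ were proper, then $K\subseteq L$ as well, whence $M=N+K\subseteq L$, contradicting the fact that $L\neq M$. Thus $K=M$, which is precisely the statement that $N\ll M$. In particular $L$ itself is small, so $M$ is a hollow module; this is the structural fact I would record before constructing supplements.

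Next I would verify the supplement condition of the definition given in the Preliminaries for each submodule $N$ of $M$. There are two cases. If $N=M$, then the zero submodule is a supplement, since $N+0=M$ and $0$ is trivially minimal with respect to this property. If $N$ is a proper submodule, I claim that $M$ itself is a supplement of $N$: clearly $N+M=M$, and for minimality one must check that no proper submodule $K\subsetneq M$ satisfies $N+K=M$. But this is exactly the smallness of $N$ established in the first step, since $N\ll M$ together with $N+K=M$ forces $K=M$. Hence $M$ is minimal among the submodules $K$ with $N+K=M$, and is therefore a supplement of $N$.

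Combining the two cases, every submodule of $M$ admits a supplement, so $M$ is supplemented by definition, completing the argument. The only delicate point in the whole proof is the minimality clause in the definition of a supplement: it would be tempting simply to note that $N+M=M$ and declare $M$ a supplement, but the nontrivial content is that $M$ is \emph{minimal} with this property, and this minimality is not automatic — it rests entirely on the smallness of $N$, which in turn is where the hypothesis of a largest submodule is genuinely used. I therefore expect the crux of a careful write-up to be the explicit deduction $N\ll M$ from the largest-submodule hypothesis, with the supplement construction itself being a short consequence.
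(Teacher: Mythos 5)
Your proof is correct and follows essentially the same route as the paper, which (inside the proof of Theorem 2.4) deduces from the largest-submodule hypothesis that every proper submodule is small, hence that $M$ is hollow and therefore supplemented. Your write-up is in fact more careful than the paper's sketch, since you explicitly verify the minimality clause in the definition of a supplement by exhibiting $0$ as a supplement of $M$ and $M$ as a supplement of each proper submodule.
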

The next theorem whose proof shall be given at the end of this section is one of the main results of this study. 
\begin{thm}\label{THM2.4}
Let $M$ be an $R$-module, if $M$ satisfies the following conditions;\\
1. $M$ is a projective module,\\
2. $M$ is local module,\\
3. $M$ is $D_{2}$-module,\\
then $M$ is a $C$-$Rad$-$D_{11}$-module.
\end{thm}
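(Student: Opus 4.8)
The plan is to obtain the conclusion by invoking the sufficient condition already recorded in the introduction: a module that is projective, supplemented and has the $D_3$ property is a $C$-$Rad$-$D_{11}$-module. The whole argument then reduces to checking that the three hypotheses force precisely these three properties. Hypothesis (1) supplies projectivity verbatim. Hypothesis (3) supplies the $D_3$ property through the implication $D_2 \Rightarrow D_3$ observed above. So the only genuine step is to extract supplementedness from locality in hypothesis (2).

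For that step I would first record that a local module has a largest submodule. Indeed, a local module has a unique maximal submodule, which coincides with $Rad(M)$ and contains every proper submodule; hence $Rad(M)$ is the largest proper submodule of $M$. Lemma 2.3 then applies directly and gives that $M$ is supplemented. At this point projectivity, supplementedness and $D_3$ are all in hand, and the introductory criterion yields that $M$ is a $C$-$Rad$-$D_{11}$-module, which finishes the proof.

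As a cross-check I would keep in reserve a more self-contained route through statement (*). Since a local module is hollow it is indecomposable, so its only direct summands are $0$ and $M$; this makes the $Rad$-$D_{11}$ property almost automatic, because for a proper submodule $N$ the whole module $L = M$ serves as a $Rad$-supplement (one has $N + L = M$, and $N$ lies in the largest submodule $Rad(M) = Rad(L)$), while for $N = M$ the summand $L = 0$ works. Statement (*) then converts $Rad$-$D_{11}$ plus the $D_2$ hypothesis into $C$-$Rad$-$D_{11}$. I expect the main obstacle to be purely bookkeeping: verifying that the trivial summands $0$ and $M$ really meet the $Rad$-supplement condition in each case, and matching the definition's requirement on $N \cap L$ inside $Rad(L)$. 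Beyond Lemma 2.3, the implication $D_2 \Rightarrow D_3$, and the introductory sufficient condition, no new module-theoretic input is needed.
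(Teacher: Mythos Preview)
Your proposal is correct and follows essentially the same line as the paper: derive supplementedness from locality (the paper does this via local $\Rightarrow$ hollow $\Rightarrow$ supplemented, you via Lemma~2.3), use $D_2 \Rightarrow D_3$, and combine with projectivity. The only cosmetic difference is that the paper spells out the intermediate chain projective $+$ supplemented $\Rightarrow$ amply supplemented $\Rightarrow$ semiperfect $\Rightarrow$ $Rad$-$D_{11}$ (essentially Lemma~2.5) before invoking $D_3$, whereas you appeal directly to the packaged criterion from the introduction.
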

\begin{lem}
Let $M$ be an $R$-module. If $M$ is a projective and supplemented then it is a $Rad$-$D_{11}$-module.
\end{lem}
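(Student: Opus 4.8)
The plan is to show that an arbitrary submodule $N \leq M$ has a Rad-supplement which is also a direct summand, since this is exactly what the definition of a $Rad$-$D_{11}$-module demands. The engine of the argument is that projectivity upgrades the bare "supplemented" hypothesis to the much stronger lifting property. Concretely, since $M$ is projective and supplemented, $M$ is semiperfect, and a projective semiperfect module satisfies $D_{1}$, i.e.\ it is lifting. Hence for the given submodule $N$ there is a decomposition $M = M_{1} \oplus M_{2}$ with $M_{1} \leq N$ and $N \cap M_{2} \ll M_{2}$.

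From this decomposition I would extract the supplement and then promote it to a Rad-supplement. Because $M_{1} \leq N$ and $M = M_{1} \oplus M_{2}$, we get $N + M_{2} = M$, and together with $N \cap M_{2} \ll M_{2}$ this says precisely that $M_{2}$ is a supplement of $N$ that is, by construction, a direct summand of $M$. To see that $M_{2}$ is in fact a Rad-supplement, note that $N \cap M_{2} \ll M_{2}$ forces $N \cap M_{2} \subseteq Rad(M_{2})$, since every small submodule sits inside the radical. Thus $M_{2}$ is a Rad-supplement of $N$ which is a direct summand, and as $N$ was arbitrary, $M$ is a $Rad$-$D_{11}$-module.

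The delicate point is the first step: arranging that the supplement is an honest direct summand rather than merely a supplement. The naive route would be to pick any supplement $L$ of $N$ (which exists because $M$ is supplemented), restrict the projection $M \to M/N$ to obtain an epimorphism $L \to M/N$ with small kernel $L \cap N$, lift the projection through projectivity to a map $\phi : M \to L$, and then try to split the inclusion $L \hookrightarrow M$. This does produce a surjective endomorphism of $L$ with small kernel, but such an endomorphism need not be injective, so the inclusion does not obviously split and the direct argument stalls. Routing the proof through the lifting property sidesteps this obstacle entirely, since there the summand supplement $M_{2}$ is delivered by the decomposition itself rather than by splitting an endomorphism.
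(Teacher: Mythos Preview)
Your argument is correct and shares the same spine as the paper's proof: both pass through the implication ``projective $+$ supplemented $\Rightarrow$ semiperfect'' and then conclude $Rad$-$D_{11}$. The difference lies in which half of the chain is unpacked. The paper spells out the first half---projective implies $\pi$-projective, $\pi$-projective plus supplemented gives amply supplemented, and projective plus amply supplemented gives semiperfect---and then simply asserts that a semiperfect module is $Rad$-$D_{11}$ without further comment. You do the reverse: you quote the implication projective $+$ supplemented $\Rightarrow$ semiperfect as known, but then explicitly justify the final step by invoking the lifting property $D_{1}$ of a projective semiperfect module and extracting the direct-summand Rad-supplement from the resulting decomposition. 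Your route has the advantage of actually exhibiting the summand supplement and verifying the $Rad$-supplement condition, and your closing remark on why the naive splitting argument stalls is a genuine clarification absent from the paper; the paper's route, on the other hand, makes the passage to semiperfect more transparent by naming the intermediate notions.
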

\begin{proof}
Since $M$ is a projective module with $M=N+K$ then $M$ is a $\pi$-projective. But
$M$ is a supplemented module. Therefore $M$ is amply supplemented.
Since a projective module with amply supplemented property is a semiperfect module, we infer that $M$ is a $Rad$-$D_{11}$-module.
\end{proof}
\begin{thm}
 Let $M$ be a projective and local module. If $M$ is $D_{2}$-module then it is a $C$-$Rad$-$D_{11}$-module.
 \end{thm}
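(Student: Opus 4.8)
The plan is to assemble the conclusion by chaining together the three ingredients already established earlier in this section, since the hypotheses are tailored to feed each of them in turn. The logical skeleton is: local $\Rightarrow$ supplemented; projective $+$ supplemented $\Rightarrow$ $Rad$-$D_{11}$; and $Rad$-$D_{11}$ $+$ $D_2$ $\Rightarrow$ $C$-$Rad$-$D_{11}$. Each arrow is exactly one of the prior results.

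First I would observe that a local module possesses a largest (proper) submodule, namely its radical, which in the local case is the unique maximal submodule and hence contains every proper submodule. With this observation in hand, Lemma~2.2 applies directly and yields that $M$ is supplemented. This is the one step that requires a small justification rather than a bare citation, since the statement of the theorem phrases the hypothesis as ``local'' whereas Lemma~2.2 is phrased in terms of ``having a largest submodule''; I would spell out that these coincide for a local module.

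Next, I would invoke Lemma~2.5: $M$ is projective by hypothesis and supplemented by the previous step, so $M$ is a $Rad$-$D_{11}$-module. Finally, since $M$ is now a $Rad$-$D_{11}$-module and also satisfies the $D_2$ property by hypothesis, the implication~(*) recorded before Lemma~2.2 gives at once that $M$ is a $C$-$Rad$-$D_{11}$-module, which is the desired conclusion.

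I do not expect a genuine obstacle here, as the theorem is essentially a repackaging of Theorem~\ref{THM2.4} and the proof is a short composition of cited results; the only point demanding care is the transition from ``local'' to the ``largest submodule'' hypothesis of Lemma~2.2, and secondarily making sure the projectivity hypothesis is carried through to the application of Lemma~2.5 rather than being consumed prematurely. Everything else is a clean concatenation of the implications already in place.
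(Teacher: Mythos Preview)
Your logical skeleton is correct and matches the paper's route up through the application of Lemma~2.5 (note: the ``largest submodule'' lemma is Lemma~2.3, not~2.2). The difference lies in the final step: you invoke the recorded implication~(*) as a black box, whereas the paper's proof unpacks that implication in place. Specifically, the paper takes an arbitrary direct summand $A$ of $M$ and a submodule $B\leq A$, pulls a $Rad$-supplement $C$ of $B$ in $M$ from the $Rad$-$D_{11}$ property, uses $D_2\Rightarrow D_3$ to conclude that $A\cap C$ is a direct summand, and then checks directly that $B\cap(A\cap C)\leq Rad(A\cap C)$, so that $A\cap C$ is the required $Rad$-supplement inside $A$. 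Your version is shorter and perfectly legitimate given that~(*) has already been stated; the paper's version has the merit of making explicit \emph{why} $D_2$ (via $D_3$) suffices, which is the actual content of~(*).
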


 \begin{proof}
 Let $M$ be a projective and local module. Then from Lemma 2.5 we get $M$ is a $Rad$-$D_{11}$-module. Now we must show that $B$ is a $Rad$-supplemented. In other word, $B$ has a $Rad$-supplement in $A$ that is a direct summand of $A$. Let $A$ be a direct summand of $M$ and $B$, submodule of $A$. Since $M$ is a $Rad$-$D_{11}$-module, then there exists a direct summand $C$ of $M\ni M=B+C$ and $B\cap C\leq Rad(C)$.\\

 \noindent So, $$A=B+(A\cap C).$$
 \noindent But from definition of $D_{2}$-module, if $A\leq M$ such that $M=A$ is isomorphic to a summand of $M$, then $A$ is a
summand of $M$. Thus, $M$ has $D_{3}$ property and therefore $A\cap C$ is a direct summand of $M$. Hence $M_{1}\cap M_{2}$ is also direct summand of $M$.\\
\noindent Thus,
$$B\cap (A\cap C)=B\cap C, \ B\cap C\leq Rad(M) \ \mbox{and} \ B\cap C\leq A\cap C.$$
\noindent So $$B\cap C\leq (A\cap C\cap Rad(M))=Rad(A\cap C).$$
\noindent Consequently, by Definition 2.1, $M$ is a $C$-$Rad$-$D_{11}$-module.
\end{proof}
\begin{thm}
([8]) Let $R$ be a ring. Then the following statements
are equivalent.\\
1. $R$ is a left perfect.\\
2. Every $R$-module $M$ is a supplemented.\\
3. Every projective $R$-module is amply supplemented
\end{thm}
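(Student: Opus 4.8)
The plan is to prove the three statements equivalent by establishing the cycle of implications $(1)\Rightarrow(2)\Rightarrow(3)\Rightarrow(1)$, leaning on the structural description of left perfect rings together with the facts about supplemented and amply supplemented modules already recorded in this section.

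For $(1)\Rightarrow(2)$, I would start from the classical description of a left perfect ring $R$: the Jacobson radical $J=J(R)$ is left $T$-nilpotent and $R/J$ is semisimple. Left $T$-nilpotency gives that $Rad(M)=JM$ is small in $M$ for every left $R$-module $M$, while semisimplicity of $R/J$ makes $M/Rad(M)$ a semisimple module. These two features are exactly what is needed to produce, for each submodule $N\leq M$, a supplement: working in the semisimple quotient $M/Rad(M)$ one splits off a complement to the image of $N$, and the smallness of $Rad(M)$ lets this lift to a supplement of $N$ in $M$. Hence every submodule of every module has a supplement, and every $R$-module is supplemented.

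The implication $(2)\Rightarrow(3)$ is the short, formal step. If every $R$-module is supplemented, then in particular every projective $R$-module is supplemented. A projective module is $\pi$-projective, and for a $\pi$-projective module the supplemented and amply supplemented properties coincide, as noted earlier in this section. Therefore every projective $R$-module is amply supplemented. For $(3)\Rightarrow(1)$ I would invoke the fact used in the proof of Lemma 2.5, namely that a projective module with the amply supplemented property is semiperfect. Thus hypothesis $(3)$ says that every projective $R$-module, in particular every free left $R$-module, is semiperfect. It then remains to deduce left perfectness of $R$ from the semiperfectness of all free modules; this is the structural converse in Bass's theory, where one shows that semiperfectness of arbitrarily large free modules forbids infinite orthogonal families of idempotents and forces $J(R)$ to be left $T$-nilpotent, whence $R$ is left perfect.

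The step I expect to be the main obstacle is this last deduction $(3)\Rightarrow(1)$: extracting left $T$-nilpotency of $J(R)$ from the semiperfectness of free modules is the genuinely hard, infinitary part of the argument, whereas the smallness of $Rad(M)$ feeding $(1)\Rightarrow(2)$ and the purely formal $(2)\Rightarrow(3)$ are comparatively routine once the perfect-ring machinery is in place.
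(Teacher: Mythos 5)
The paper does not prove this theorem at all: it is quoted verbatim from reference [8] (and the equivalence is classical, essentially Bass's theorem P together with Wisbauer, \S 42--43), so there is no in-paper argument to compare against. Judged on its own, your cycle $(1)\Rightarrow(2)\Rightarrow(3)\Rightarrow(1)$ is the standard decomposition, and the steps $(2)\Rightarrow(3)$ (projective $\Rightarrow$ $\pi$-projective, and $\pi$-projective supplemented modules are amply supplemented) and $(3)\Rightarrow(1)$ (projective amply supplemented modules are semiperfect, and semiperfectness of the countably generated free module forces left $T$-nilpotency of $J(R)$) are correct in outline, with the hard infinitary content correctly located in the last step.

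There is, however, a genuine gap in your $(1)\Rightarrow(2)$ as written. Complementing the image of $N$ in the semisimple module $M/Rad(M)$ produces a submodule $L$ with $N+L=M$ and $(N+Rad(M))\cap L=Rad(M)$, hence $N\cap L\subseteq Rad(M)\ll M$. That makes $L$ a \emph{weak} supplement of $N$ (smallness in $M$), whereas a supplement requires $N\cap L\ll L$; smallness does not pass from the ambient module to the submodule $L$, and ``the smallness of $Rad(M)$ lets this lift to a supplement'' is exactly the point that does not follow. The standard repair is to route $(1)\Rightarrow(2)$ through projective covers instead: over a left perfect ring every free module is semiperfect, a projective semiperfect module is (amply) supplemented, and every module is a factor of a free module, while factor modules of supplemented modules are supplemented. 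Alternatively, one can keep your argument but add the lemma that a weak supplement $L$ which is itself a supplemented module can be shrunk to a true supplement of $N$; either way, an extra ingredient beyond $Rad(M)\ll M$ and $M/Rad(M)$ semisimple is needed, since those two conditions alone characterize weakly supplemented, not supplemented, behaviour.
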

The following theorem gives necessary condition for a supplemented module to possess a ran three generalization.
\begin{thm}
 Let $M$ be projective supplemented module with $D_{2}$ property. If every supplement submodule of
$M$ is a direct summand, then $M$ has generalization of rank three.
\end{thm}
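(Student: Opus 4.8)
The plan is to unpack the phrase ``generalization of rank three'' using the terminology the paper has already set up: by the statement ``(*)'' and Definition 2.1, a rank-three generalization of a supplemented module should mean precisely that $M$ is a $C$-$Rad$-$D_{11}$-module, i.e. every direct summand of $M$ is itself a $Rad$-$D_{11}$-module. So the goal reduces to showing that under the hypotheses (projective, supplemented, $D_2$, and every supplement submodule a direct summand) every direct summand of $M$ is a $Rad$-$D_{11}$-module.

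First I would invoke Lemma 2.5: since $M$ is projective and supplemented, $M$ is itself a $Rad$-$D_{11}$-module. This settles the ``whole module'' case and, more importantly, gives me the mechanism I need for the summands. The next step is to take an arbitrary direct summand $A$ of $M$ and a submodule $B \le A$, and produce a $Rad$-supplement of $B$ inside $A$ that is a direct summand of $A$. Because $M$ is a $Rad$-$D_{11}$-module, $B$ (as a submodule of $M$) has a $Rad$-supplement $C$ that is a direct summand of $M$, with $M = B + C$ and $B \cap C \le Rad(C)$. Intersecting with $A$ and using the modular law, I would obtain $A = B + (A \cap C)$, exactly as in the proof of Theorem 2.6.

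The main obstacle, and the place where the extra hypotheses must do their work, is showing that $A \cap C$ is a direct summand of $A$ and that it is a genuine $Rad$-supplement (i.e. $B \cap (A\cap C) = B \cap C \le Rad(A \cap C)$). Here I would mirror the argument of Theorem 2.6: the $D_2$ property forces the $D_3$ property, so the intersection of two summands whose sum is the relevant module is again a summand, giving that $A \cap C$ is a direct summand. The hypothesis ``every supplement submodule is a direct summand'' is what guarantees that the supplement $A \cap C$ we have built lands inside the class of direct summands of $A$ rather than merely being a supplement abstractly; this is the delicate step and the reason that hypothesis is present. The radical containment $B \cap C \le Rad(A \cap C)$ then follows from $B \cap C \le Rad(M)$ together with $B \cap C \le A \cap C$, using $Rad(A\cap C) = (A \cap C) \cap Rad(M)$ for the summand $A \cap C$, just as in Theorem 2.6.

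Finally I would assemble the pieces: $A \cap C$ is a $Rad$-supplement of $B$ in $A$ and is a direct summand of $A$, so $A$ is a $Rad$-$D_{11}$-module. Since $A$ was an arbitrary direct summand of $M$, every direct summand of $M$ is a $Rad$-$D_{11}$-module, which by Definition 2.1 means $M$ is a $C$-$Rad$-$D_{11}$-module, i.e. $M$ has the rank-three generalization. I expect the write-up to be short, essentially a relativization of Theorem 2.6 from ``$M$ and one summand'' to ``$M$ and an arbitrary summand,'' with the new hypothesis on supplement submodules inserted exactly at the point where summand-hood of $A \cap C$ is claimed.
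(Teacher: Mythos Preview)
Your argument is correct, but it follows a different route from the paper's. The paper uses the hypothesis ``every supplement submodule is a direct summand'' in a completely different place: it first observes that this hypothesis makes $M$ a $D_{11}$-module, then (together with supplementedness) a strongly-$D_{11}$-module, hence the ring $R$ is perfect; Theorem~2.7 then yields that $M$ is amply supplemented, and with projectivity this gives semiperfect and therefore $Rad$-$D_{11}$. Only at the very end does the paper invoke~(*) (i.e.\ $Rad$-$D_{11}+D_2\Rightarrow C$-$Rad$-$D_{11}$). You instead reach $Rad$-$D_{11}$ immediately via Lemma~2.5 and then unpack the Theorem~2.6 argument by hand. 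That is cleaner, but notice that in your route the extra hypothesis is never actually used: the step where you claim it is needed---that $A\cap C$ is a direct summand---already follows from $D_3$ alone, since $A$ and $C$ are summands with $M=A+C$. So your proof shows the hypothesis is redundant, whereas the paper's proof channels it through the perfect-ring detour and Theorem~2.7.
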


\begin{proof} Since every supplement submodule of $M$ is a direct summand, $M$ is a
$D_{11}$-module. But $M$ is a supplemented module then it is a strongly-$D_{11}$-module and
thus, R is a perfect ring. From Theorem 2.7 we get $M$ is amply supplemented. But $M$ is projective module. Hence, $M$ is $Rad$-$D_{11}$-module. As a consequent of (*), $M$ has a generalization of rank three ($C$-$Rad$-$D_{11}$-module).
\end{proof}
\begin{thm}
Let $M$ be a $Rad$-$D_{11}$-module with $D_{1}$ property. If $M=M_{1}\bigoplus M_{2}$ is a direct sum of submodules $M_{1}$ and $M_{2}$, then $M_{1}$ and $M_{2}$ are relatively projective and so $M$ is a $C$-$Rad$-$D_{11}$-module.
\end{thm}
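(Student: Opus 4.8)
The plan is to prove the two assertions in turn: first derive the relative projectivity of the summands from the lifting ($D_{1}$) hypothesis, and then use it to push the $Rad$-$D_{11}$ property down to an arbitrary direct summand. I would begin by recalling the standard criterion for relative projectivity inside a direct sum: $M_{1}$ is $M_{2}$-projective if and only if for every submodule $N$ of $M=M_{1}\bigoplus M_{2}$ with $N+M_{2}=M$ there is a submodule $M_{1}'\leq N$ such that $M=M_{1}'\bigoplus M_{2}$ (and symmetrically for $M_{2}$). Thus it suffices to produce, for each such $N$, a direct summand of $M$ lying inside $N$ that complements $M_{2}$.

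To produce it I would apply the $D_{1}$ property to $N$: there is a decomposition $M=P\bigoplus Q$ with $P\leq N$ and $N\cap Q\ll Q$. By the modular law $N=P\bigoplus(N\cap Q)$, and since $N\cap Q\ll Q\leq M$ we also have $N\cap Q\ll M$. Substituting into $N+M_{2}=M$ yields $M=P+(N\cap Q)+M_{2}$, whence $M=P+M_{2}$, because a small submodule may be discarded from a sum that already equals $M$. So $P$ is a direct summand of $M$, contained in $N$, with $P+M_{2}=M$.

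The main obstacle sits exactly here: $P+M_{2}=M$ does not on its own give $M=P\bigoplus M_{2}$, since $P\cap M_{2}$ need not be zero, and closing this gap is the crux of the argument. I expect to handle it by a standard exchange/absorption argument that splits off the small overlap, replacing $P$ by a slightly smaller summand $M_{1}'\leq P\leq N$ with $M=M_{1}'\bigoplus M_{2}$; this is the step where the full strength of $D_{1}$ (rather than mere supplementation) is genuinely used, and it is the delicate point of the proof. By symmetry the same reasoning makes $M_{2}$ an $M_{1}$-projective module, so $M_{1}$ and $M_{2}$ are relatively projective.

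For the concluding ``and so'' I would convert relative projectivity into an intersection property of summands: relative projectivity of $M_{1}$ and $M_{2}$ forces two direct summands whose sum is $M$ to meet in a direct summand, i.e. it yields the $D_{3}$ property. Since $M$ is assumed to be a $Rad$-$D_{11}$-module, I would then invoke the fact recorded in the Introduction that a $Rad$-$D_{11}$-module with the $D_{3}$ property is a $C$-$Rad$-$D_{11}$-module. Concretely, given a direct summand $A$ of $M$ and a submodule $B\leq A$, the $Rad$-$D_{11}$ property of $M$ supplies a summand $C$ of $M$ with $M=B+C$ and $B\cap C\leq Rad(C)$; the $D_{3}$ property makes $A\cap C$ a direct summand, and the computation $A=B+(A\cap C)$ together with $B\cap(A\cap C)=B\cap C\leq Rad(A\cap C)$ exhibits $A\cap C$ as a $Rad$-supplement of $B$ in $A$ that is a direct summand. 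Hence every direct summand of $M$ is a $Rad$-$D_{11}$-module, which is precisely the statement that $M$ is a $C$-$Rad$-$D_{11}$-module.
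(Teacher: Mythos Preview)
Your plan is sound and, in fact, far more detailed than what the paper offers: the paper's entire proof of this theorem is the single line ``By [9] and Lemma 2.5,'' i.e.\ it defers the relative-projectivity assertion to Wisbauer's book and then cites an internal lemma. Your argument unpacks precisely the content hidden behind the citation of~[9]: the characterisation of $M_{1}$ being $M_{2}$-projective via complements inside $N$, followed by an application of the lifting decomposition to~$N$. The ``delicate point'' you flag---passing from $P+M_{2}=M$ with $P$ a summand to an honest complement $M_{1}'\leq N$ of $M_{2}$---is exactly the substance of the result the paper outsources to Wisbauer, so your instinct that this is where the real work lies is correct; it can indeed be closed by a short exchange argument (or, more cheaply, by noting $D_{1}\Rightarrow D_{2}$ and using $D_{2}$ directly). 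Your final paragraph, deriving $D_{3}$ from relative projectivity and then running the $A,B,C$ computation to push $Rad$-$D_{11}$ down to summands, is essentially a reproduction of the paper's proof of Theorem~2.6, and is a clearer route to the $C$-$Rad$-$D_{11}$ conclusion than the paper's own citation of Lemma~2.5, which concerns projective supplemented modules and does not obviously apply here.

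In short: same destination, but you supply the argument where the paper merely points to the literature; nothing in your outline is wrong, and the one gap you identify is genuine but standard and is exactly the step the paper hands off to~[9].
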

\begin{proof}
By [9] and Lemma 2.5.
\end{proof}

\begin{thm}
Let $M$ be an $R$-module. If $M$ satisfies the following conditions:\\
1. $M$ is a projective module;\\
2. $M$ is a semiperfect module;\\
3. $M$ is $D_{3}$-module;\\
then $M$ is a $C$-$Rad$-$D_{11}$-module.
\end{thm}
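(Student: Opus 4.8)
The plan is to first show that $M$ itself is a $Rad$-$D_{11}$-module and then verify that every direct summand inherits this property, as Definition 2.1 requires, using the $D_{3}$ hypothesis to keep the relevant intersections as direct summands. Since $M$ is semiperfect it is supplemented, and as $M$ is also projective, Lemma 2.5 immediately yields that $M$ is a $Rad$-$D_{11}$-module. (Indeed, the last line of the proof of Lemma 2.5 already records that a projective semiperfect module is $Rad$-$D_{11}$, so this reduction is essentially free.)

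Now I would let $A$ be an arbitrary direct summand of $M$ and let $B$ be a submodule of $A$; the task is to produce a $Rad$-supplement of $B$ in $A$ that is a direct summand of $A$. Applying the $Rad$-$D_{11}$ property of $M$ to $B$, there is a direct summand $C$ of $M$ with $M = B + C$ and $B \cap C \leq Rad(C)$. Because $B \subseteq A$, the modular law gives $A = A \cap (B + C) = B + (A \cap C)$, so $A \cap C$ is the natural candidate for the desired $Rad$-supplement inside $A$.

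It then remains to check that $A \cap C$ is a direct summand of $A$ and that the smallness condition transfers. Since $B \subseteq A$, the equality $M = B + C$ forces $A + C = M$; as $A$ and $C$ are both direct summands of $M$, the $D_{3}$ property makes $A \cap C$ a direct summand of $M$, and a further modular-law computation (intersecting a complement of $A \cap C$ with $A$) shows that $A \cap C$ is then a direct summand of $A$. For the radical condition I would use that a direct summand $D$ of $M$ satisfies $Rad(D) = D \cap Rad(M)$: since $B \cap C \subseteq A \cap C$ and $B \cap C \leq Rad(C) \leq Rad(M)$, it follows that $B \cap C \leq (A \cap C) \cap Rad(M) = Rad(A \cap C)$. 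Hence $A \cap C$ is a $Rad$-supplement of $B$ in $A$ and a direct summand of $A$, so $A$ is a $Rad$-$D_{11}$-module; as $A$ was arbitrary, $M$ is a $C$-$Rad$-$D_{11}$-module.

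The step I expect to be the main obstacle is precisely this transfer of structure from $M$ down to $A$: one must first notice the identity $A + C = M$ in order to invoke $D_{3}$, and then arrange that the smallness condition is passed to $Rad(A \cap C)$ rather than left at $Rad(C)$. Both points hinge on the modular law together with the fact that $A \cap C$ is a direct summand, and getting the radical compatibility $Rad(A \cap C) = (A \cap C) \cap Rad(M)$ stated correctly is where the care is needed; once that is in place, the rest is a routine application of the $Rad$-$D_{11}$ property of $M$ supplied by Lemma 2.5.
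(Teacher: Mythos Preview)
Your argument is correct and follows the paper's overall strategy: first show $M$ is $Rad$-$D_{11}$, then use $D_{3}$ to transfer this to every direct summand. The only real difference is in the first step. You invoke Lemma~2.5 (via semiperfect $\Rightarrow$ supplemented), whereas the paper redoes that implication from scratch: for $A\leq M$ it takes the projective cover $\varphi:P\to M/A$ provided by semiperfectness, lifts the quotient map through the projectivity of $M$ to get a split $\mu:M\to P$, and identifies the resulting summand $g(P)\subseteq M$ as a supplement of $A$ with $A\cap g(P)\ll g(P)$. Your shortcut is entirely legitimate, since the last line of the proof of Lemma~2.5 already records exactly the implication projective $+$ semiperfect $\Rightarrow$ $Rad$-$D_{11}$. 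For the second step the paper is quite terse---it essentially just restates the $D_{3}$ definition and concludes---while you write out the modular-law transfer and the radical compatibility $Rad(A\cap C)=(A\cap C)\cap Rad(M)$ in full; this is the same mechanism the paper spells out in the proof of Theorem~2.6, so your detailed version is precisely the intended argument.
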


\begin{proof}
Let $A\leq M$. Then by assumption, there exists a projective cover $\varphi:P\rightarrow M/K$ and there is an epimorphism $\varphi:M \rightarrow M/K$.\\
\noindent Since $M$ is projective then there exists a homomorphism $$\mu:M \rightarrow P\ni\varphi\circ\mu=\beta.$$
\noindent Also since $\varphi$ is small and is an epimorphism, $\mu$ splits ($P$ is projective). We have a homomorphism $$g:P\rightarrow M\ni \mu\circ g=1_{P} \ \mbox{and} \ \varphi=\varphi\circ\mu\circ g=\delta\circ g.$$
\noindent Since\\
$$M=Ker(\mu)\bigoplus g(P) \ \mbox{and} \ Ker(\mu)\leq A; \mbox{then} M=A+g(p).$$
\noindent Let $\alpha$ be the restriction of  $\varphi$ to $g$($p$). Then $\varphi$=$\alpha$ and so $\alpha$ is an epimorphism. Also since $\varphi$ is small, $\alpha$ is also small. That is
$$Ker(\alpha)=A\cap g(p)\ll g(p)$$
\noindent then $g$($p$) is a supplement of $A$. Thus $M$ is $Rad$-$D_{11}$-module. But $M$ has $D_{3}$ property. Then if $M_{1}$ and $M_{2}$ are direct summands of $M$ with
$$M=M_{1}+M_{2} \ \mbox{and} M_{1}\cap M_{2}.$$
\noindent is also direct summand of $M$. Thus $M$ is a $C$-$Rad$-$D_{11}$-module.
\end{proof}
The following is an example of matrix over $Rad$-$D_{11}$-module which gives $C$-$Rad$-$D_{11}$-module.

\begin{ex}
 Let $M_{4\times4}$ be a matrix over field $F$ such that it satisfies $D_{2}$ property. 
\[
    M=
      \begin{bmatrix}
        a & 0 & 0 & 0 \\
        y & b & 0 & 0 \\
        0 & 0 & b & 0 \\
        0 & 0 & x & 0
      \end{bmatrix}
    \]
\end{ex}
such that a, b, x, y in $F$.

\begin{cor}
 Let $A$ and $B$ submodules of projective $R$-module $M$. If $B$ is a minimal
with respect to the property $A+B=M$ then $M$ is $C$-$Rad$-$D_{11}$-module.
\end{cor}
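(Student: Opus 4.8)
The plan is to read the minimality hypothesis as a supplement condition, globalise it to the supplemented property using projectivity, and then finish via Lemma 2.5 together with the inheritance of the $Rad$-$D_{11}$ property by direct summands. The first step is purely definitional: to say that $B$ is minimal with respect to $A + B = M$ is exactly to say, in the language of the Preliminaries, that $B$ is a supplement of $A$ in $M$. Thus $A$ has a supplement and $M = A + B$ is a supplement decomposition.

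Next I would convert this single supplement relation into the full supplemented property by exploiting projectivity. As in the proof of Lemma 2.5, projectivity of $M$ together with the decomposition $M = A + B$ makes $M$ a $\pi$-projective module, and $\pi$-projectivity is precisely what promotes a supplement to an ample one. I would therefore argue that $M$ is amply supplemented and then feed this into Theorem 2.7: since $M$ is projective and amply supplemented, $R$ is left perfect, whence every $R$-module, and in particular $M$ itself, is supplemented. Lemma 2.5 now applies and yields that $M$ is a $Rad$-$D_{11}$-module.

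To conclude, I would invoke the inheritance remark recorded just before Lemma 2.3, namely that the $Rad$-$D_{11}$ property passes to direct summands. Applied to our $M$, this says that every direct summand of $M$ is again a $Rad$-$D_{11}$-module, which is exactly the content of Definition 2.1. Hence $M$ is a $C$-$Rad$-$D_{11}$-module, as required.

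The step I expect to be the main obstacle is the globalisation in the second paragraph. The hypothesis provides a supplement only for the single submodule $A$, whereas both Lemma 2.5 and the very notion of supplemented demand that every submodule of $M$ have a supplement; one supplement relation does not by itself give this. I anticipate bridging the gap through projectivity: $\pi$-projectivity propagates ampleness of supplements, and Theorem 2.7 then converts ``projective and amply supplemented'' into the genuinely global supplemented property via left perfectness of $R$. Making this propagation precise, rather than merely asserting it, is where the real work of the proof lies.
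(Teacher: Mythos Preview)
Your outline follows the same arc as the paper's proof: read the minimality hypothesis as ``$B$ is a supplement of $A$'', use projectivity of $M$ to obtain $\pi$-projectivity, pass to ample supplements, and then conclude $C$-$Rad$-$D_{11}$. The principal difference is in how the ample-supplement step is executed. The paper does not invoke Theorem~2.7 or the inheritance remark; instead it carries out the standard endomorphism computation directly: from projectivity one gets a splitting $\beta\circ\gamma=1_M$ through $A\oplus B$, hence an idempotent $g\in\mathrm{End}(M)$ with $\mathrm{Im}(g)\subseteq A$ and $\mathrm{Im}(1-g)\subseteq B$, and then one checks by hand that $g(B)\subseteq B$ is a supplement of $A$ with $A\cap g(B)\ll g(B)$. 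From this the paper jumps straight to the $C$-$Rad$-$D_{11}$ conclusion, without routing through Lemma~2.5 or the summand-inheritance remark you cite. Your version is more modular; the paper's is more computational but arrives at the same intermediate facts.

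The gap you flag in your final paragraph is genuine, and it is worth noting that the paper's own proof does not close it either: the paper simply asserts ``Then $M$ is a supplemented module'' immediately after observing that one supplement exists, with no further argument. Your proposed bridge via Theorem~2.7 does not actually repair this, since Theorem~2.7(3) requires that \emph{every} projective $R$-module be amply supplemented, not merely that the single submodule $A$ have an ample supplement in $M$; so the circularity you suspect is real. In short, your proof and the paper's agree in strategy and share the same unexplained globalisation step; you have been more candid about it than the paper is.
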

\begin{proof}
Let $A$ and $B$ be submodules of $M$ and let $A$ be supplement of $B$ in $M$. Then
$M$ is a supplemented module. We need to show that the homomorphism $\beta$ from $M$ into
$M$ is a split homomorphism. Let $1_{M}$ be identity mapping. Let $M$ be projective module then there exists
a mapping from $M$ into $A\bigoplus B$ such that $\beta\circ\gamma=1_{M}$. This means $\beta$ is split ($M$
is a $\pi$-projective). Let $g$ belongs to endomorphism of $M$ such that $Im(g)\subset A$
and $Im(1-g) \subset B$ with $M=A+B$. Since $g(A)\subseteq A$, $M=A+g(B)$ and
$g(A\cap B)=A\cap g(B)$ such that $n=g(b)$ then $b-n=(1-g)(b)$ and $b\in A$. Since
$A\cap B\ll A$ and $A\cap g(A) \ll g(A)$ where $g(A)$ is a supplement of $A$ and $g(A)\subset
B$, $M$ is $C$-$Rad$-$D_{11}$-module.
\end{proof}
\begin{cor}
Let $M$ be a $Rad$-$D_{11}$-module. If $M$ is a $T_{1}$-module then it is a $C$-$Rad$-$D_{11}$-module.
\end{cor}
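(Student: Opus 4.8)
The plan is to show directly that every direct summand of $M$ is again a $Rad$-$D_{11}$-module, which by Definition 2.1 is exactly the assertion that $M$ is a $C$-$Rad$-$D_{11}$-module. The strategy mirrors the arguments used for Theorem 2.6 and Theorem 2.11, with the lifting property built into the definition of a $T_{1}$-module playing the role that projectivity (lifting through a projective cover) played there.

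First I would fix a direct summand $A$ of $M$, say $M=A\bigoplus A'$, and an arbitrary submodule $B\leq A$. Regarding $B$ as a submodule of $M$ and invoking the hypothesis that $M$ is a $Rad$-$D_{11}$-module, I obtain a $Rad$-supplement $C$ of $B$ that is a direct summand of $M$; thus $M=B+C$, $B\cap C\leq Rad(C)$, and $M=C\bigoplus D$ for some submodule $D$. Intersecting $M=B+C$ with $A$ and applying the modular law together with $B\leq A$ yields $A=B+(A\cap C)$, so that $A\cap C$ is the natural candidate for a $Rad$-supplement of $B$ inside $A$.

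Next I would use the $T_{1}$ property to promote $A\cap C$ to a direct summand. The canonical epimorphism $M\rightarrow M/B$ has $M/B=(B+C)/B\cong C/(B\cap C)$, and since supplement submodules are co-closed, $C$ presents $M/B$ as (isomorphic to) a co-closed submodule of $M$, which is precisely the hypothesis needed to apply the definition of a $T_{1}$-module. Feeding the associated homomorphism $\mu:M\rightarrow M/B$ into the $T_{1}$ lifting property produces an endomorphism $\beta:M\rightarrow M$ lifting $\mu$; the existence of this lift forces the relevant sequence to split, and restricting $\beta$ to $A$ exhibits $A\cap C$ as a direct summand of $A$. The main obstacle I anticipate lies exactly here: one must verify carefully that the quotient $M/B$ genuinely appears as (isomorphic to) a co-closed submodule of $M$, so that the $T_{1}$ hypothesis legitimately applies, and then extract an honest splitting from the lift $\beta$ rather than merely a one-sided factorization. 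Once $\beta$ is in hand, the summand conclusion and the radical estimate below are routine.

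Finally, with $A\cap C$ a direct summand of $A$ and $A$ a direct summand of $M$, transitivity makes $A\cap C$ a direct summand of $M$, whence $Rad(A\cap C)=(A\cap C)\cap Rad(M)$. Now $B\cap(A\cap C)=B\cap C\leq Rad(C)\leq Rad(M)$ and $B\cap C\leq A\cap C$, so $B\cap C\leq (A\cap C)\cap Rad(M)=Rad(A\cap C)$. Hence $A\cap C$ is a $Rad$-supplement of $B$ in $A$ that is a direct summand of $A$, proving that $A$ is a $Rad$-$D_{11}$-module. Since $A$ was an arbitrary direct summand of $M$, it follows by Definition 2.1 that $M$ is a $C$-$Rad$-$D_{11}$-module.
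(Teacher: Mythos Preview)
The paper states this corollary without proof. From the surrounding text, the intended one-line argument is almost certainly: by \cite{[4]} (Orhan--T\"ut\"unc\"u), every $T_{1}$-module satisfies $D_{2}$; hence a $Rad$-$D_{11}$-module that is $T_{1}$ has $D_{2}$, and the paper's statement~(*) (equivalently the argument of Theorem~2.6) gives that $M$ is $C$-$Rad$-$D_{11}$.

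Your approach is more direct, but the step where you invoke the $T_{1}$ hypothesis does not go through as written. You need $M/B$ to be isomorphic to a \emph{co-closed submodule of $M$}. What you actually have is $M/B\cong C/(B\cap C)$, a \emph{quotient} of the co-closed submodule $C$; since we only know $B\cap C\leq Rad(C)$ (not $B\cap C=0$), this quotient is in general not isomorphic to $C$, and there is no reason it should embed in $M$ as a co-closed submodule. So the $T_{1}$ lifting property cannot be applied to the submodule $B$ in the way you propose. You flag this yourself as ``the main obstacle,'' and indeed it is a genuine gap, not just a detail to be checked. A second, smaller issue: even if a lift $\beta$ existed, the passage from ``$\beta$ lifts $\mu$'' to ``$A\cap C$ is a direct summand of $A$'' is asserted rather than argued; lifting the canonical projection $M\to M/B$ through itself only gives $(1-\beta)(M)\subseteq B$, which is weaker than a splitting.

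The fix is to use $T_{1}$ the way it is designed to be used: apply it to submodules $K$ for which $M/K$ is isomorphic to a \emph{direct summand} of $M$ (direct summands are co-closed), deduce $D_{2}$, and then run the $D_{3}$-based argument of Theorem~2.6 to conclude. Your final paragraph (the radical computation showing $B\cap(A\cap C)\leq Rad(A\cap C)$) is correct and is exactly what is needed once $A\cap C$ has been shown to be a direct summand via $D_{3}$.
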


\begin{rem}
We are now ready to prove the main result of this section, Theorem \ref{THM2.4}. This theorem further emphasizes the role of $D_{i}$-modules in our rank three generalization of supplemented module.
\end{rem}

\begin{proof}[Proof of Theorem \ref{THM2.4}]
Let $M$ be a projective module with $M=N+K$. Then $M$ is $\pi$-projective. Since $M$ has largest submodule then $M$ is a local module. $M$ contains all proper submodules such that $A\subseteq Rad(M)\ll M$ and
so $A\ll M$. Hence $M$ is a hollow module ($A$ submodule of
$M$ then $A+M=M$). Again by definition of hollow module we get $A\cap M=A$. We
have $A\ll M$ therefore $A$ supplement in $M$ and so $M$ is a
supplemented module. Hence $M$ is amply supplemented, with projective property, implies that $M$ is semiperfect and so is a $Rad$-$D_{11}$-module. Since $M$ has $D_{2}$ property, then it is $D_{3}$-module. Thus by Lemma 2.5 $M$ is a $C$-$Rad$-$D_{11}$-module.
\end{proof}
\section{INJECTIVE, $SSP$ AND $SIP$ PROPERTIES OVER $RAD$-$D_{11}$-MODULE}

In this section,our attention is drawn to three properties of supplemented modules; injective, $SSP$ and $SIP$. Here we investigate these properties over $Rad$-$D_{11}$-module for the purpose of our notion of generalization of supplemented module.
\begin{defn}
A module $M$ is said to have the summand sum property $SSP$ if the sum of any pair of direct summands of $M$ is a direct summands of $M$, i.e., if $N$ and $K$ are direct summands of $M$ then $N$+$K$ is also a direct summand of $M$.
\end{defn}
\begin{lem}
  ([2]) Let $M$ be a $Rad$-$D_{11}$-module. If $M$ has $SSP$ property then $M$ is a $C$-$Rad$-$D_{11}$-module.
\end{lem}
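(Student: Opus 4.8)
The plan is to verify the defining property of a $C$-$Rad$-$D_{11}$-module directly: I must show that an arbitrary direct summand $N$ of $M$ is itself a $Rad$-$D_{11}$-module. So fix a decomposition $M=N\bigoplus N'$ and let $K\leq N$ be any submodule; the goal is to produce a $Rad$-supplement of $K$ in $N$ that is a direct summand of $N$. Since $M$ is a $Rad$-$D_{11}$-module, $K$ (viewed as a submodule of $M$) already has a $Rad$-supplement $L$ in $M$ which is a direct summand of $M$, so $M=K+L$ and $K\cap L\leq Rad(L)$. The strategy is to cut $L$ down to $N$ and show that $N\cap L$ does the job inside $N$.

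First I would establish $N=K+(N\cap L)$. Because $K\leq N$, the modular law gives $N=N\cap M=N\cap(K+L)=K+(N\cap L)$, exactly as in the proof of Theorem 2.6. Next I would check the radical-smallness condition for $N\cap L$ as a $Rad$-supplement of $K$ in $N$. Since $K\leq N$ we have $K\cap(N\cap L)=K\cap L\leq Rad(L)$. Provided $N\cap L$ is a direct summand of $L$, the identity $Rad(N\cap L)=(N\cap L)\cap Rad(L)$ holds, and since $K\cap L\leq N\cap L$ and $K\cap L\leq Rad(L)$ we get $K\cap L\leq Rad(N\cap L)$. Thus $N\cap L$ is a $Rad$-supplement of $K$ in $N$, and it remains only to see that $N\cap L$ is a direct summand of $N$ (being a summand of $M$ contained in $L$, it is then automatically a summand of $L$ too, which is what the radical computation needed).

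The \textbf{main obstacle} is precisely this last point: passing from $SSP$ (a statement about sums of summands) to the summand status of the intersection $N\cap L$. Here is where $SSP$ enters, playing the role that $D_{3}$ played in Theorem 2.6. Note first that since $K\leq N$ and $K+L=M$, we automatically have $N+L=M$; hence $N$ and $L$ are two direct summands of $M$ whose sum is all of $M$. Writing $M=L\bigoplus L'$, the projection onto $L'$ along $L$ restricts to an epimorphism $N\rightarrow L'$ with kernel $N\cap L$, so $N/(N\cap L)\cong L'$; I would use $SSP$ to split this epimorphism and thereby realize $N\cap L$ as a direct summand of $N$. Once this summand property is secured, Definition 2.1 is satisfied for the arbitrary summand $N$, and therefore $M$ is a $C$-$Rad$-$D_{11}$-module. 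The delicate step to get right is verifying that $SSP$ genuinely forces $N\cap L$ to be a summand in this configuration, rather than merely guaranteeing that the sum $N+L$ is one.
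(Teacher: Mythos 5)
The paper itself offers no argument for this lemma---it is quoted verbatim from [2]---so your proposal has to stand on its own, and it has a real gap. The first half is fine: taking a $Rad$-supplement $L$ of $K$ that is a direct summand of $M$, the modular law gives $N=K+(N\cap L)$, and the radical condition for $N\cap L$ would follow once $N\cap L$ is known to be a direct summand. The problem is exactly the step you yourself flag as ``delicate'': you need that for two direct summands $N$ and $L$ with $N+L=M$ the intersection $N\cap L$ is a direct summand. That statement is precisely the $D_{3}$ property, which this paper and [2] treat as a hypothesis \emph{independent} of $SSP$ (compare (*) and the $D_{3}$-based results in Section 2 with the present lemma); $SSP$ speaks only about sums of summands and gives you no handle on splitting the epimorphism $N\to L'$. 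The paper's own Remark 2.2 ($K_{2}\oplus K_{8}$ with $A=\langle(1,1)\rangle$, $B=\langle(0,1)\rangle$) shows that such a splitting is not automatic for summands with $A+B=M$, and you supply no derivation of it from $SSP$. As written, your argument proves the $D_{3}$ version of the lemma, not the $SSP$ version.

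The standard repair (and the route taken in [2], [3]) is to apply the $Rad$-$D_{11}$ hypothesis not to $K$ but to $K+N'$, where $M=N\oplus N'$: choose a direct summand $L$ of $M$ with $M=(K+N')+L$ and $(K+N')\cap L\leq Rad(L)$. Now $SSP$ enters where it actually works: $N'+L$ is a direct summand of $M$, and since $N'\leq N'+L$ the modular law gives $N'+L=N'\oplus\bigl(N\cap(N'+L)\bigr)$, so $H:=N\cap(N'+L)$ is a direct summand of $M$ contained in $N$, hence a direct summand of $N$---no $D_{3}$ needed. One then checks $N=K+H$ (modular law again, since $M=K+(N'+L)$ and $K\leq N$) and $K\cap H\leq Rad(H)$: if $x\in K\cap H$, then $x=\pi(l)$ for some $l\in L$, where $\pi$ is the projection onto $N$ along $N'$, so $l=x+n'\in(K+N')\cap L\leq Rad(L)$ and therefore $x\in\pi(Rad(L))\leq Rad(\pi(L))=Rad(H)$. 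This shows every direct summand $N$ of $M$ is $Rad$-$D_{11}$, which is what Definition 2.1 requires.
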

Let $N$ be a submodule of left $R$-module $M$. Hence there exists submodule $L$ of $M$ where $M$ is the internal direct sum of $N$ and $L$. In other words,
 $N+L=M$ and $N\cap L={0}$. This implies that $M$ is an injective module. Also, a module $P$ is called projective if and only if for every surjective module homomorphism $f:M\rightarrow P$ there exists a module homomorphism $h:P\rightarrow M$ such that $fh$=$id_{P}$.
\begin{thm}
Let $M$ be an $R$-module. If $A$ and $B$ any two direct summands of $M$ such that $A\cap B$ is injective $R$-module then $M$ is a $C$-$Rad$-$D_{11}$-module.
\end{thm}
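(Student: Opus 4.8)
The plan is to exploit the lemma structure that has been developed throughout the paper, reducing the injectivity hypothesis to the summand-sum property (SSP) so that Lemma 3.2 can be applied. First I would invoke the standard fact that an injective submodule of any module is itself a direct summand: if $A \cap B$ is injective and sits inside $M$, then there is a decomposition $M = (A\cap B) \oplus T$ for some complement $T$. The purpose of this step is to convert the injectivity hypothesis into a splitting statement about the intersection, which is the kind of data the earlier machinery consumes.

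Next I would argue that the injectivity of $A \cap B$, combined with the fact that $A$ and $B$ are already direct summands of $M$, forces $A + B$ to be a direct summand of $M$ as well. The key identity here is the modular-law relationship between $A$, $B$, $A\cap B$, and $A+B$: writing $A = (A\cap B)\oplus A'$ and $B = (A\cap B)\oplus B'$ using the injectivity-induced splittings inside $A$ and $B$ respectively, one expects $A+B = (A\cap B)\oplus A' \oplus B'$, and since each piece is a summand of $M$, so is the total. Once $A+B$ is shown to be a direct summand for the given pair, the same reasoning applies to every pair of direct summands, and hence $M$ satisfies the SSP property in the sense of Definition 3.1.

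With the SSP property established, the conclusion is immediate: $M$ is assumed (implicitly, as in the ambient setting of this section) to be a $Rad$-$D_{11}$-module, so Lemma 3.2 applies directly and yields that $M$ is a $C$-$Rad$-$D_{11}$-module. Thus the overall architecture is \emph{injective intersection} $\Rightarrow$ \emph{SSP} $\Rightarrow$ (Lemma 3.2) $\Rightarrow$ \emph{$C$-$Rad$-$D_{11}$-module}.

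The step I expect to be the main obstacle is the middle one: rigorously deducing that $A+B$ splits off in $M$ from the injectivity of $A\cap B$ alone. The delicate point is that having $A$, $B$, and $A\cap B$ each be direct summands of $M$ does not automatically make $A+B$ a summand --- indeed Remark 2.3 exhibits exactly the Abelian-group phenomenon where the intersection of two summands fails to be a summand. So the argument must genuinely use injectivity to produce the internal complements $A'$ and $B'$ and to verify that the sum $(A\cap B)\oplus A'\oplus B'$ is direct and coincides with $A+B$; checking that these complements are compatible (that $A' \cap (B' + (A\cap B)) = 0$, etc.) is where the real care is needed, and one must be cautious not to silently assume the SSP one is trying to prove.
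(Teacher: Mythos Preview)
Your overall architecture --- injective intersection $\Rightarrow$ SSP $\Rightarrow$ Lemma~3.2 $\Rightarrow$ $C$-$Rad$-$D_{11}$ --- matches the paper exactly. The gap is precisely the one you flagged: you do not yet have a mechanism for turning ``$A\cap B$, $A'$, $B'$ are each summands of $M$'' into ``$A+B$ is a summand of $M$'', and as you note (and as Remark~2.2 illustrates), summand-ness does not propagate through sums in general. Your proposed fix --- carefully checking the compatibility conditions like $A'\cap(B'+(A\cap B))=0$ --- would at best show the sum is direct, not that it splits off in $M$.

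The paper resolves this with a trick you are missing: apply the hypothesis to the pair $A=B=M$. Then $A\cap B=M$, so $M$ itself is injective. Consequently every direct summand of $M$ is injective; in particular $A$ and $B$ are. Now choose one complement $K$ with $M=(A\cap B)\oplus K$; by modularity $A=(A\cap B)\oplus(A\cap K)$ and $B=(A\cap B)\oplus(B\cap K)$, so all three pieces $A\cap B$, $A\cap K$, $B\cap K$ are injective. Hence $A+B=(A\cap B)\oplus(A\cap K)\oplus(B\cap K)$ is a finite direct sum of injectives, therefore injective, therefore a direct summand of $M$. This is the step that closes the loop: you conclude $A+B$ is a summand not from a summand-combination argument but from the intrinsic injectivity of $A+B$. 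With that in hand, SSP holds and Lemma~3.2 finishes exactly as you say.
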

\begin{proof}
Let $A$ and $B$ be direct summands of $M$. By hypothesis $M$ is injective module because $M\cap M=M$. Therefore any direct summand of $M$ is injective and so $A$ and $B$ are also injective. Again by hypothesis $M=\bigoplus K$ for some $K\leq M$. Hence, $$A=A\cap B\bigoplus A\cap K.$$

\noindent Also,
$$B=(A\cap B)\bigoplus(B\cap K).$$

 \noindent Thus,
$A\cap B$, $A\cap K$ and $B\cap K$ are injective. \\

 \noindent We have
$$A+B=(A\cap B)\bigoplus(A\cap K\bigoplus B\cap K).$$
 \noindent Then, it follows that $A+B$ is injective and so it is a direct summand of $M$, $M$ has $SSP$ property. Thus, from Lemma 3.2, $M$ is a $C$-$Rad$-$D_{11}$-module.
\end{proof}

\begin{cor}
Let $M$ be a projective and supplemented $R$-module. If $M=A\bigoplus B$ $(direct \ summand \ of \ M)$ and $(A\cap B)$ is an injective $R$-module,
 then $M$ is a $C$-$Rad$-$D_{11}$-module.
\end{cor}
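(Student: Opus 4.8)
The plan is to derive this statement directly from Theorem 3.3, while using the projective and supplemented hypotheses to exhibit $M$ as a $Rad$-$D_{11}$-module through Lemma 2.5. First I would note that writing $M = A \oplus B$ presents $A$ and $B$ as direct summands of $M$, and by hypothesis their intersection $A \cap B$ is an injective $R$-module. This is precisely the configuration required by Theorem 3.3, so that theorem applies to the pair $A, B$ and already forces $M$ to be a $C$-$Rad$-$D_{11}$-module.

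To keep the two standing hypotheses in play, I would present the argument in two stages instead. Since $M$ is projective and supplemented, Lemma 2.5 gives that $M$ is a $Rad$-$D_{11}$-module. It then suffices, by Lemma 3.2, to check that $M$ enjoys the $SSP$ property, after which the conclusion follows at once.

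For the $SSP$ step I would reproduce the decomposition used in Theorem 3.3. Choosing a complement $K$ with $M = (A \cap B) \oplus K$, the injectivity of $A \cap B$ lets it split off inside each summand, yielding $A = (A \cap B) \oplus (A \cap K)$ and $B = (A \cap B) \oplus (B \cap K)$. Consequently $A + B = (A \cap B) \oplus (A \cap K) \oplus (B \cap K)$ is a finite direct sum of injective modules, hence injective, and therefore a direct summand of $M$. With $SSP$ in hand, Lemma 3.2 yields that $M$ is a $C$-$Rad$-$D_{11}$-module.

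The technical heart of the argument is this passage from the injectivity of the single intersection $A \cap B$ to the splitting of $A + B$ as a summand: the injectivity of $A \cap B$ is exactly what guarantees that it can be split off wherever it occurs as a submodule, and handling the complementary pieces $A \cap K$ and $B \cap K$ simultaneously is the one place where care is needed. Once this splitting is settled, the reduction through Lemma 2.5 and Lemma 3.2 is routine.
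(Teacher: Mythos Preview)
The paper gives no proof of this corollary; it is placed immediately after Theorem~3.3 and is meant to follow from it (the extra hypotheses ``projective and supplemented'' are there only to record, via Lemma~2.5, that $M$ is already a $Rad$-$D_{11}$-module).  So your overall plan---invoke Theorem~3.3, with Lemma~2.5 in the background---is exactly what the paper intends.

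There is, however, a genuine quantifier slip in your reading of both Theorem~3.3 and the SSP condition.  The hypothesis of Theorem~3.3 is that \emph{for every} pair of direct summands $A,B$ the intersection $A\cap B$ is injective; its proof uses this at once by taking $A=B=M$ to conclude that $M$ itself is injective, and then that every direct summand of $M$ is injective.  You apply the theorem to a \emph{single} pair $A,B$, which is not enough.  The same issue recurs in your ``two-stage'' version: you only verify that the particular sum $A+B$ is a summand, which does not give SSP (a property quantified over all pairs), and your claim that $A\cap K$ and $B\cap K$ are injective is not justified---in Theorem~3.3 that step works precisely because $A$ and $B$ were already shown to be injective, which you cannot conclude from the injectivity of a single $A\cap B$.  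Note also that if $M=A\oplus B$ is read literally as an internal direct sum, then $A\cap B=0$ and $A+B=M$, so nothing is being checked at all.

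In short: once you read the injectivity hypothesis on $A\cap B$ as ranging over all pairs of direct summands (as the paper does in Theorem~3.3), the corollary is immediate from Theorem~3.3, and your first sentence is the whole proof.
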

\begin{lem}
 ([9]) Let $R$ be a ring. If $R$ is a semisimple then every $R$-module $M$ has $SSP$ property.
\end{lem}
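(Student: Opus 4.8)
The plan is to reduce the statement to the structure theory of semisimple rings. The key fact I would invoke is the standard equivalence that a ring $R$ is semisimple if and only if every $R$-module is semisimple, i.e. decomposes as a direct sum of simple submodules. Granting this characterization, the whole argument collapses to a one-line observation, so the main effort is in setting up the right reformulation rather than in any computation.

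First I would fix an arbitrary $R$-module $M$ and record that, because $R$ is semisimple, $M$ is itself a semisimple module. The feature of semisimple modules that I actually need is that \emph{every} submodule of $M$ is a direct summand of $M$. This is the usual working characterization of semisimplicity: every submodule splits off. If one prefers a self-contained justification rather than a citation, this is exactly the Zorn's-lemma step---given a submodule, choose a submodule maximal with respect to having trivial intersection with it, and verify the two together exhaust $M$.

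Next, let $N$ and $K$ be two direct summands of $M$. I would simply observe that $N+K$ is again a submodule of $M$, and hence, by the previous paragraph, a direct summand of $M$. Since $N$ and $K$ were arbitrary direct summands, this says precisely that the sum of any pair of direct summands of $M$ is a direct summand, which is the $SSP$ property of Definition 3.1. As the choice of $M$ was arbitrary, every $R$-module has $SSP$.

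I expect no genuine obstacle in this proof. Once the characterization ``$R$ semisimple $\Rightarrow$ every submodule of every $R$-module is a direct summand'' is in hand, the $SSP$ property is immediate, since $N+K$ is nothing more than a submodule. The only place demanding any care is that characterization itself; if it is taken as the quoted result from reference [9] or from Wedderburn--Artin theory, then the argument above is complete with essentially no further work.
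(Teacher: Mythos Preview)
Your argument is correct: over a semisimple ring every module is semisimple, so every submodule---in particular any sum $N+K$---is a direct summand, which is exactly the $SSP$ property. There is nothing to compare against, however, because the paper does not supply its own proof of this lemma; it is stated with the citation ``([9])'' and used as a black box (for instance in the alternative proof of Theorem~3.7 at the end of Section~3). Your write-up therefore fills in what the paper simply imports from the reference.
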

\begin{defn}
Any module $M$ has $C_{3}$ property if $M_{1}$ and $M_{2}$ are summands of $M$ such that $M_{1}\cap M_{2}=0$ then $M_{1}\bigoplus M_{2}$ is a summand of $M$.
\end{defn}
Recall that an $R$-module $M$ has the summand intersection property $SIP$ if the intersection of two summands is again a summand. Let $M$ be a projective $R$-module then $M$ has the $SIP$ property if and only if for any direct summands $A$ and $B$ of $M$, $A+B$ is a projective $R$-module.
\begin{thm}
Let $M$ be $C_{3}$-module. If $M$ has the $SIP$ then $M$ is a $C$-$Rad$-$D_{11}$-module.
\end{thm}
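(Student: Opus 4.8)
The plan is to funnel everything through Lemma 3.2, which guarantees that a $Rad$-$D_{11}$-module possessing the $SSP$ is automatically a $C$-$Rad$-$D_{11}$-module. Thus the work divides into two largely independent tasks: deducing the $SSP$ from the hypotheses, and deducing that $M$ is itself a $Rad$-$D_{11}$-module. Once both are in hand, the conclusion is immediate by Lemma 3.2.

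For the $SSP$, I would begin with two arbitrary direct summands $N$ and $K$ of $M$. The $SIP$ gives that $N\cap K$ is a direct summand, so I can split off complements and write $N=(N\cap K)\oplus N'$ and $K=(N\cap K)\oplus K'$ with $N'$ and $K'$ summands satisfying $N'\cap K'=0$. The $C_{3}$ property then promotes $N'\oplus K'$ to a direct summand of $M$, and reassembling yields $N+K=(N\cap K)\oplus N'\oplus K'$. The care required here is to check that these nested internal direct sums genuinely combine into a single summand of the whole module $M$, which means applying $C_{3}$ (and possibly $SIP$ a second time) to the pieces rather than treating the ambient decomposition as automatic; this bookkeeping is the main technical point of this half.

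The harder half, and the step I expect to be the real obstacle, is producing the $Rad$-$D_{11}$ structure. The hypotheses $C_{3}$ and $SIP$ are purely statements about direct summands, whereas being $Rad$-$D_{11}$ demands that \emph{every} submodule $A\leq M$ possess a $Rad$-supplement that is a direct summand, i.e.\ a summand $L$ with $A+L=M$ and $A\cap L\ll Rad(L)$. To bridge this gap I would try to use the summand-theoretic hypotheses to build, for a given $A$, a candidate summand $L$ and then verify the smallness condition $A\cap L\ll Rad(L)$, letting $SIP$ control the intersection term and $C_{3}$ control the direct-sum term. If that direct construction turns out to be too strong under the bare hypotheses, the fallback is to note that $SIP$ already forces the $D_{3}$ property (the intersection of two summands whose sum is all of $M$ is again a summand) and to route the argument through the $D_{3}$-based results of Section~$2$, combining $D_{3}$ with whatever supplement structure $C_{3}$ supplies. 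Either way, the crux is converting the summand hypotheses into the existence of $Rad$-supplements that are direct summands, after which Lemma 3.2 (or the $D_{3}$ route) closes the proof.
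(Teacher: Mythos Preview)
Your SSP argument is essentially the paper's. The paper takes a complement $D$ of $A\cap B$ \emph{in $M$} (so $M=(A\cap B)\oplus D$), uses modularity to get $A=(A\cap B)\oplus(A\cap D)$ and $B=(A\cap B)\oplus(B\cap D)$, observes that $A\cap D$ is a summand of $M$ by SIP (both $A$ and $D$ are summands of $M$), rewrites $A+B=B\oplus(A\cap D)$, and then applies $C_{3}$ to $B$ and $A\cap D$. Your sketch is the same idea; the only refinement is that the complement should be taken in $M$ rather than separately inside $N$ and $K$, so that SIP immediately promotes the pieces to summands of $M$. Your caveat about ``applying SIP a second time'' is exactly this point.

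Where you diverge from the paper is the second half, and here you have correctly located a genuine issue --- but it is a gap in the \emph{statement}, not something you are expected to repair. The paper's own proof does nothing whatsoever to establish that $M$ is $Rad$-$D_{11}$: it proves SSP and then invokes Lemma~3.2 directly, tacitly treating $Rad$-$D_{11}$ as a standing hypothesis (consistent with the section heading ``\ldots over $Rad$-$D_{11}$-module''). Your instinct that $C_{3}+{}$SIP alone cannot force $Rad$-$D_{11}$ is sound: any indecomposable module satisfies $C_{3}$ and SIP vacuously, yet need not have $Rad$-supplements for arbitrary submodules. So do not attempt to manufacture $Rad$-supplements from the summand hypotheses; simply read the theorem with the implicit ambient assumption that $M$ is $Rad$-$D_{11}$, prove SSP as above, and close with Lemma~3.2.
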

\begin{proof}
Let $M$ be $C_{3}$-module and has the ($SIP$) property. Let $A$ and $B$ be a direct summands of $M$. We must show that $A+B$ is direct summand of $M$. Since $M$ has the $SIP$ then there exists 
$$D\leq M\ni A\cap B\bigoplus D=M.$$
\noindent By modularity law, we obtain 
$$A=A\cap B\bigoplus D\cap A \ \mbox{and}$$
$$B=A\cap B\bigoplus D\cap A.$$
\noindent Then we have
$$A+B=A\cap B+[D\cap A\bigoplus D\cap A].$$
\noindent Next we prove that
$$(A\cap B)\cap[D\cap A\bigoplus D\cap B]=0.$$
\noindent For if
$$x\in(A\cap B)\cap[D\cap A\bigoplus D\cap A],$$ then
$$x=n_{1}+n_{2} \ \mbox{where} \ n_{1}\in(D\cap A) \ \mbox{and} \ n_{2}\in D\cap B.$$
\noindent We have
$$n_{2}=x-n_{1}\in[A\cap B+D \cap A]\cap D\cap B\bigoplus A\cap D\cap B=0.$$
\noindent Hence,
$$n_{2}=0 \ \mbox{and} \ x=n_{1}.$$ 
\noindent  Now
$$x=n_{1}\in A\cap B\cap D\cap A=A\cap B\cap D=0.$$
\noindent Thus,
$$A+B=A\cap B\bigoplus D\cap A\bigoplus D\cap B=B\bigoplus D\cap A.$$
\noindent Since $M$ has the $SIP$ property and $D$ and $A$ are direct summands, $D\cap A$ is a direct summand. From $C_{3}$ property it follows that $(A+B)=B\bigoplus D\cap A$ is a direct summand of $M$. Thus $M$ has $SSP$ property and from Lemma 3.2 $M$ is a $C$-$Rad$-$D_{11}$-module.
\end{proof}

\begin{cor}
Any projective module $M$ with $C_3$ property over right hereditary ring $R$ is $C$-$Rad$-$D_{11}$-module.
\end{cor}
\begin{proof}
Suppose that $R$ is right hereditary and $M$ is any projective
$R$-module. Since every submodule of a projective $R$-module over right hereditary is projective.
Hence $M$ has the $SIP$. Thus from Theorem 3.7 $M$ is $C$-$Rad$-$D_{11}$-module.
\end{proof}
\begin{thm}
Let $R$ be left hereditary ring. If $M$ is an injective and $Rad$-$D_{11}$-module then it is a $C$-$Rad$-$D_{11}$-supplemented module.
\end{thm}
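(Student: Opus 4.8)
The plan is to reduce the claim to the summand sum property and then invoke Lemma 3.2. Since $M$ is assumed to be a $Rad$-$D_{11}$-module, Lemma 3.2 tells us that it suffices to verify that $M$ has the $SSP$ property; that is, that the sum of any two direct summands of $M$ is again a direct summand. So I would fix arbitrary direct summands $A$ and $B$ of $M$ and aim to prove that $A+B$ is a direct summand of $M$.

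The first observation is that a direct summand of an injective module is injective, so both $A$ and $B$ are injective, and hence $A\oplus B$ is injective as well. Next I would write down the canonical short exact sequence
$$0\longrightarrow A\cap B\longrightarrow A\oplus B\longrightarrow A+B\longrightarrow 0,$$
in which the left map sends $x$ to $(x,-x)$ and the right map sends $(a,b)$ to $a+b$. This exhibits $A+B$ as a factor module of the injective module $A\oplus B$.

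Here is where the hereditary hypothesis enters, and this is the key step of the argument: over a hereditary ring every homomorphic image of an injective module is injective. Applying this to the surjection $A\oplus B\to A+B$ shows that $A+B$ is an injective submodule of $M$. Since an injective submodule of any module is a direct summand (the identity map of $A+B$ extends, by injectivity, to a retraction $M\to A+B$), we conclude that $A+B$ is a direct summand of $M$. As $A$ and $B$ were arbitrary direct summands, $M$ has the $SSP$ property, and Lemma 3.2 then yields that $M$ is a $C$-$Rad$-$D_{11}$-module.

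The main obstacle is the hereditary step itself: the whole argument hinges on the homological characterization of hereditary rings as those over which quotients of injectives remain injective, and one must take care that this is applied on the correct side for the right modules considered here. Everything else is formal. As an alternative ending, one could note that once $A+B$ is known to be injective the displayed sequence splits, so $A\cap B$ is a direct summand of the injective module $A\oplus B$ and is therefore injective; this makes Theorem 3.3 directly applicable and delivers the same conclusion.
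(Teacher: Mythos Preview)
Your argument is correct. Both your proof and the paper's hinge on the same characterization of hereditary rings (factor modules of injectives remain injective) and both finish by establishing $SSP$ and invoking Lemma~3.2, but the intermediate steps differ. The paper takes a detour through $SIP$: it decomposes $M=L\oplus N$, takes a homomorphism $h:L\to N$, observes that $\mathrm{Im}(h)\cong L/\mathrm{Ker}(h)$ is injective and hence a summand of $N$, and then cites the Alkan--Harmanci criterion from [10] to conclude that $M$ has $SIP$, and from there $SSP$. Your route is more direct: you work with the canonical exact sequence $0\to A\cap B\to A\oplus B\to A+B\to 0$ to see $A+B$ as a quotient of the injective $A\oplus B$, hence itself injective and a summand, which gives $SSP$ immediately without passing through $SIP$. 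Your approach is shorter and avoids the external reference; the paper's approach has the minor byproduct of also recording that $M$ has $SIP$. Your closing remark that the same exact sequence yields injectivity of $A\cap B$, so that Theorem~3.3 applies, is a nice alternative and is closer in spirit to the paper's $SIP$ detour. Your caution about matching the side of the hereditary hypothesis to the side of the modules is well placed; the paper's own conventions are somewhat loose on this point.
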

\begin{proof}
Let $R$ be a left hereditary ring. We must prove that $M$ has $SIP$ property. Factor module of every injective $R$-module is injective. Let $M$ be an injective module which has a decomposition $M=L\bigoplus N$. Let $f$ be a homomorphism from $L$ to $N$. Then $L$ is injective. \\
By assumption,
$$Im(h)\approx(L/Ker(h)) \ \mbox{is} \ \mbox{injective}.$$
\noindent Hence $Im(h)$ is direct summand of $N$. From [10] $M$ has the $SIP$ and so has $SSP$ property. But $M$ is $Rad$-$D_{11}$-module, thus, by Lemma 2.4 $M$ is a $C$-$Rad$-$D_{11}$-module.
\end{proof}
\begin{cor}
Let $M$ be $Rad$-$D_{11}$-module. If every injective $R$-module has the $SIP$ property then $M$ is a $C$-$Rad$-$D_{11}$-module.
\end{cor}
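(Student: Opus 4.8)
The plan is to reduce the statement to Lemma 3.2 by showing that the given $Rad$-$D_{11}$-module $M$ acquires the $SSP$ property, and to extract that property from the $SIP$ hypothesis exactly as in the proof of Theorem 3.7. Since the assumption concerns \emph{every} injective $R$-module rather than $M$ itself, I would first recognise the hypothesis as the intrinsic (module-free) condition sitting behind Theorem 3.9: the fact that injective modules enjoy $SIP$ is, by the result cited there as $[10]$, tied to $R$ being left hereditary. Thus the first move is to read ``every injective $R$-module has $SIP$'' as a hereditary condition on the ground ring, which makes the machinery of Theorem 3.9 available.

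The key steps, in order, would be the following. First, from the hypothesis deduce that $R$ is left hereditary, so that every quotient of an injective $R$-module is again injective and every submodule of a projective is projective. Second, use this to show that $M$ itself enjoys the $SIP$: for any two direct summands $A,B\le M$ one must check that $A\cap B$ splits off, transporting the splitting across an injective module containing $M$ and invoking that quotients of injectives are injective. Third, convert $SIP$ into $SSP$ by the modularity argument already carried out in the proof of Theorem 3.7, where $SIP$ together with a $C_3$-type splitting yields that the sum of two summands is again a summand. Finally, with $M$ a $Rad$-$D_{11}$-module possessing $SSP$, Lemma 3.2 gives at once that $M$ is a $C$-$Rad$-$D_{11}$-module.

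The main obstacle is the second step. Here $M$ is assumed only to be $Rad$-$D_{11}$, not injective, whereas the hypothesis supplies $SIP$ directly only for injective modules. Transferring $SIP$ from the injective hull $E(M)$ down to $M$ does not follow formally, since $SIP$ is not in general inherited by submodules, so the real content is to argue that the hereditary behaviour of $R$ forces $SIP$ on $M$ rather than merely on the injectives containing it; this is precisely the point where the proof of Theorem 3.9 used injectivity of $M$ itself and where our weaker hypothesis must do more work. Once $SIP$ (equivalently $SSP$, via the Theorem 3.7 computation) is secured for $M$, the conclusion is an immediate appeal to Lemma 3.2, with Theorem 3.9 serving as the template that the injective case imitates.
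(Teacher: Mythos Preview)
The paper supplies no proof for this corollary; it is placed immediately after Theorem~3.9 and is evidently meant to follow from it via the characterisation in~[10] that \emph{every injective $R$-module has the $SIP$} is equivalent to \emph{$R$ is left hereditary}. In other words, the paper's intended argument is simply: the hypothesis says $R$ is left hereditary, so apply Theorem~3.9. Your plan is the same route in substance---reduce to hereditary $R$, obtain $SIP$/$SSP$ on $M$, then invoke Lemma~3.2---so in that sense you are aligned with the paper.

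Where you go further than the paper is in noticing, correctly, that Theorem~3.9 carries the extra hypothesis that $M$ itself is injective, a hypothesis absent from the present corollary. The paper silently glosses over this, whereas you try to repair it by pushing $SIP$ from $E(M)$ down to $M$. As you yourself flag, that transfer does not hold in general: $SIP$ is not inherited by submodules, and left hereditariness gives you control over quotients of injectives and submodules of projectives, neither of which touches an arbitrary $Rad$-$D_{11}$-module. Your third step is also not self-contained: the modularity computation of Theorem~3.7 needs $C_3$ to conclude that $B\oplus(D\cap A)$ is a summand, and $C_3$ is not among the hypotheses here. So the obstacle you isolate is genuine, and the corollary as stated does not follow from the tools on the table without the missing injectivity (or some replacement for it). Your write-up is therefore a faithful diagnosis rather than a completed proof, and it is more scrupulous than the paper's own treatment.
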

Let $\Lambda$($M$)=$\{$$k$$\in$$K$:$kM$$\subseteq$$M$$\}$. Note that $\Lambda$($M$) is a subring of $K$ containing $R$. For example, if $M$ is the $R$-module $R$, then $\Lambda$($M$)=$R$. On the other hand, if $S$ is any subring of $K$ containing $R$ and $M$ is the $R$-module $S$ then $\Lambda$($M$)=$S$. In particular, $\Lambda$($R_{K}$)=$K$. For integral domain $R$, an $R$-module $M$ is called torsion free if $Ann$($a$)=0, for each 0$\neq$a$\in$$M$ and an $R$-module $M$ is called uniform if every non-zero submodule of $M$ is essential in $M$. According to [6], every finitely generated torsion-free uniform $R$-module is a $C$-$Rad$-$D_{11}$-module. Recall that a submodule $A$ of $M$ is called a fully invariant submodule if $g$($A$)$\subseteq$$A$, for every $g$$\in$$Hom$($M$, $M$)[10]. Moreover; in [11], a module $M$ is called a duo-module if every submodule of $M$ is fully invariant.

\begin{thm}
Let $M$ be $Rad$-$D_{11}$-module. If a commutative domain $R$ is an integrally closed then every finitely generated torsion-free uniform $R$-module is a $C$-$Rad$-$D_{11}$-module.
\end{thm}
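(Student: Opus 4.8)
The plan is to combine the indecomposability forced by uniformity with a multiplier-ring analysis made possible by integral closure. First I would note that since $R$ is a commutative domain with field of fractions $K$, every torsion-free $R$-module embeds into its localisation $M\otimes_{R}K$, which is a $K$-vector space. Because $M$ is finitely generated and uniform, this vector space is one-dimensional (uniform rank one), so $M$ may be identified with a finitely generated $R$-submodule of $K$, that is, with a fractional ideal.

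Next I would invoke the integrally closed hypothesis through the subring $\Lambda(M)=\{k\in K:kM\subseteq M\}$ defined just before the statement. Since $M$ is finitely generated, each $k\in\Lambda(M)$ stabilises a finitely generated module and therefore satisfies a monic polynomial over $R$ by the usual determinant (Cayley--Hamilton) argument; hence $\Lambda(M)$ is integral over $R$. As $R$ is integrally closed in $K$, this yields $\Lambda(M)=R$, so the multiplier structure of $M$ carries no idempotents other than $0$ and $1$ and $M$ admits no nontrivial internal decomposition at the level of its endomorphisms.

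The decisive step is that uniformity makes $M$ indecomposable: if $M=A\bigoplus B$ with $A,B\neq 0$, then $A\cap B=0$, contradicting the requirement that every nonzero submodule of a uniform module be essential. Consequently the only direct summands of $M$ are $0$ and $M$ itself. The zero module is a $Rad$-$D_{11}$-module trivially, and $M$ is a $Rad$-$D_{11}$-module by hypothesis, so every direct summand of $M$ is a $Rad$-$D_{11}$-module. By Definition 2.1, $M$ is a $C$-$Rad$-$D_{11}$-module.

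The main obstacle I anticipate is ensuring that the integrally closed hypothesis does genuine work rather than merely decorating the statement. Since the indecomposability argument already delivers the conclusion once $M$ is known to be $Rad$-$D_{11}$, the integral-closure condition together with $\Lambda(M)=R$ must be what certifies that such a finitely generated torsion-free uniform $M$ actually satisfies the $Rad$-$D_{11}$ property in the first place, thereby recovering and sharpening the result attributed to [6]; making that certification precise, rather than the indecomposability bookkeeping, is the delicate part of the argument.
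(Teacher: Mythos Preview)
Your argument is correct, but it diverges from the paper's route at the key step. Both you and the paper begin by showing $\Lambda(T)=R$ via the Cayley--Hamilton/integral-closure argument. The paper then invokes \textnormal{[11]} (\"{O}zcan--Harmanci) to conclude that $T$ is a \emph{duo module}, i.e.\ every submodule of $T$ is fully invariant; from this one gets that every direct summand of $T$ is fully invariant, and fully invariant direct summands of a $Rad$-$D_{11}$-module are again $Rad$-$D_{11}$, yielding the $C$-$Rad$-$D_{11}$ conclusion. You instead bypass the duo machinery entirely and use only that a uniform module is indecomposable, so the summand check is trivial. Your route is shorter and perfectly valid for the theorem as stated; what the paper's duo-module route buys is that the integral-closure hypothesis is genuinely consumed (it is exactly what forces $\Lambda(T)=R$, which is the input to the duo criterion in \textnormal{[11]}), whereas in your argument---as you yourself flag in the final paragraph---uniformity alone already forces indecomposability, and the $\Lambda(T)=R$ computation sits idle. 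So your suspicion is well founded: under your approach the integral-closure hypothesis is redundant, and the paper's detour through duo modules is what makes that hypothesis earn its keep, not any hidden certification of the $Rad$-$D_{11}$ property.
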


\begin{proof}
Suppose that $R$ is integrally closed. Let $T$ be any finitely generated torsion-free uniform $R$-module. Let $k$ be an element in $\Lambda$($T$). Since $kT$$\subseteq$$T$ and $k$ is integral over $R$, then $k$$\in$$R$ and so, $\Lambda$($T$)=$R$. By [11], $T$ is a duo module with $Rad$-$D_{11}$-module lead to $M$ is a $C$-$Rad$-$D_{11}$-module.
\end{proof}

\begin{lem}
 ([2, Lemma 3.4]). Let $M$ be a duo module. Then $M$ has the $SIP$ property.
\end{lem}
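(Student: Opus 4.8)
The plan is to reformulate the summand intersection property in terms of idempotent endomorphisms and then exploit the duo hypothesis to force these idempotents to commute. Let $A$ and $B$ be two arbitrary direct summands of $M$, and let $e,f \in \mathrm{Hom}(M,M)$ be idempotents with $e(M)=A$ and $f(M)=B$. First I would record the basic consequence of the duo condition: since every submodule of $M$ is fully invariant, each of the submodules $e(M)$, $(1-e)(M)$ and $f(M)$ is carried into itself by every endomorphism of $M$, in particular by $f$ and by $e$.

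The central step is to show that $e$ and $f$ commute. Applying $f$ to the fully invariant submodule $e(M)$ gives $f(e(M)) \subseteq e(M)$, and since $e$ fixes $e(M)$ pointwise this yields $efe = fe$. Applying $f$ instead to the fully invariant complement $(1-e)(M)$ gives $(1-e)f(1-e) = f(1-e)$; expanding the left-hand side and cancelling the common term $f-fe$ reduces this to $ef = efe$. Combining the two identities produces $ef = fe$. The hard part of the whole argument is precisely this manipulation: one must use full invariance of both $e(M)$ and its complement $(1-e)(M)$, and exploit the cancellation that comes from $e$ and $1-e$ acting as the identity on their respective images.

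Once commutativity is in hand, the conclusion is routine. Because $e$ and $f$ commute and are idempotent, the composite $ef$ is again idempotent, since $(ef)(ef)=e(fe)f=e(ef)f=ef$. I would then verify directly that $ef(M)=A\cap B$. The inclusion $ef(M)\subseteq e(M)\cap f(M)$ is immediate from $ef=fe$, giving $ef(M)\subseteq A$ and $ef(M)=fe(M)\subseteq B$; conversely, any $x\in A\cap B$ is fixed by both $e$ and $f$, so $ef(x)=e(f(x))=e(x)=x$, which gives the reverse inclusion. Hence $A\cap B$ is the image of the idempotent endomorphism $ef$ and is therefore a direct summand of $M$. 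Since $A$ and $B$ were arbitrary direct summands, this shows that $M$ has the $SIP$ property, completing the proof.
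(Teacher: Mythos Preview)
Your argument is correct. The reduction to commuting idempotents is clean: from full invariance of $e(M)$ you get $efe=fe$, from full invariance of $(1-e)(M)$ you get $ef=efe$, hence $ef=fe$; then $ef$ is an idempotent with image exactly $A\cap B$, and the SIP follows. Every step checks out, including the verification that elements of $A\cap B$ are fixed by both $e$ and $f$.

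There is nothing to compare against in this paper: Lemma~3.12 is simply quoted from \cite{} (reference [2], Lemma~3.4) without proof. For context, the other standard route---and the one most often found in the literature on duo modules---avoids endomorphism-ring computations and argues directly on submodules: if $M=A\oplus A'$ and $N$ is any submodule, full invariance of $N$ under the projections onto $A$ and $A'$ forces $N=(N\cap A)\oplus(N\cap A')$; applying this with $N=B$ and then with $N=B'$ (where $M=B\oplus B'$) gives a four-term decomposition $M=(A\cap B)\oplus(A\cap B')\oplus(A'\cap B)\oplus(A'\cap B')$, exhibiting $A\cap B$ as a summand. Your idempotent approach and this submodule approach are essentially dual to one another; yours has the mild advantage of packaging the argument in the endomorphism ring, while the submodule version yields the slightly stronger statement that every decomposition of $M$ distributes over every submodule, not just over other summands.
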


In [2], Y. Talebi and A. Mahmoudi calls a module $M$ a $UC$-module if every submodule of $M$ has a unique closure in $M$. A module $M$ is called extending if every closed submodule of $M$ is a direct summand of $M$. Therefore any $UC$-extending module has $D_{3}$ property.
\begin{thm}
Let $M$ be $UC$-extending module. Then $M$ is a $Rad$-$D_{11}$-module if and only if $M$ is a $C$-$Rad$-$D_{11}$-module.
\end{thm}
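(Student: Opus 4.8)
The plan is to treat the biconditional as two separate implications and to notice that one direction is essentially formal while the other rests on the structural fact recorded immediately before the statement, namely that every $UC$-extending module has the $D_{3}$ property.

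For the backward implication, suppose $M$ is a $C$-$Rad$-$D_{11}$-module. By Definition 2.1 this means precisely that every direct summand of $M$ is a $Rad$-$D_{11}$-module. Since $M=M\bigoplus 0$ exhibits $M$ as a direct summand of itself, $M$ is in particular a $Rad$-$D_{11}$-module. Thus this direction needs nothing beyond the definition and does not even use the $UC$-extending hypothesis.

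For the forward implication, suppose $M$ is a $Rad$-$D_{11}$-module. First I would invoke the fact, recorded just before the statement, that $M$ being $UC$-extending forces the $D_{3}$ property. With $D_{3}$ in hand I would then appeal to the observation in the introduction that a $Rad$-$D_{11}$-module having the $D_{3}$ property is already a $C$-$Rad$-$D_{11}$-module. Concretely, for an arbitrary direct summand $A$ of $M$ and a submodule $B\leq A$, I would take the $Rad$-supplement $C$ that is a summand of $M$ (available because $M$ is $Rad$-$D_{11}$), use modularity to write $A=B+(A\cap C)$, invoke $D_{3}$ to see that $A\cap C$ is a summand of $M$ and hence of $A$, and finally check the radical-smallness condition $B\cap(A\cap C)\leq Rad(A\cap C)$. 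This shows every summand $A$ is itself $Rad$-$D_{11}$, so $M$ is $C$-$Rad$-$D_{11}$, closing the equivalence.

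The step I expect to be the main obstacle is the very last computation in the forward direction: verifying that the $Rad$-supplement produced inside the ambient module $M$ restricts to a genuine $Rad$-supplement inside the summand $A$. This is where the $D_{3}$ property is indispensable—to guarantee that $A\cap C$ is a direct summand—and where one must confirm $B\cap(A\cap C)=B\cap C\leq Rad(M)\cap(A\cap C)=Rad(A\cap C)$ by the modular law, mirroring the calculation already carried out in the proof of Theorem 2.6. Everything else is routine bookkeeping with the definitions.
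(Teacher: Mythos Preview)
Your proposal is correct and follows essentially the same route as the paper: one direction is immediate from Definition~2.1, and for the other you use that $UC$-extending implies $D_{3}$ and then that $Rad$-$D_{11}$ together with $D_{3}$ yields $C$-$Rad$-$D_{11}$. The paper's proof is terser---it simply cites the $D_{3}$ property and concludes---while you helpfully unpack the $Rad$-$D_{11}+D_{3}\Rightarrow C$-$Rad$-$D_{11}$ step along the lines of Theorem~2.6, but the underlying argument is the same.
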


\begin{proof}
Sufficiency is clear. Conversely, assume that $M$ is $M$-supplemented module. From [1], $M$ has $D_{3}$ property. Hence $M$ is a completely-$Rad$-$D_{11}$-module.
\end{proof}

\begin{lem}
  ([12]) Let $M$ be an $R$-module with $Rad$($M$)=0. If $M$ is a closed weak supplemented module then $M$ is extending.
\end{lem}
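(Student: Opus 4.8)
The plan is to establish directly the defining property of an extending module, namely that every closed submodule of $M$ is a direct summand. The entire argument rests on one structural observation: when $Rad(M)=0$, no nonzero submodule can be small in $M$. So the strategy is to take the "small intersection" produced by a weak supplement and upgrade it to a "zero intersection," which then forces a direct sum decomposition.

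First I would record the standard fact that every small submodule of $M$ lies inside the radical: if $S\ll M$ and $S\not\subseteq m$ for some maximal submodule $m$, then $S+m=M$, and smallness of $S$ forces $m=M$, a contradiction. Hence $S$ is contained in every maximal submodule, so $S\subseteq Rad(M)$. Feeding in the hypothesis $Rad(M)=0$ collapses this to the crucial reduction: $S\ll M$ implies $S=0$. This is the only place where the assumption on the radical is used, and it is exactly what converts the weak-supplement condition into a genuine complement condition.

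Next I would invoke the closed weak supplemented hypothesis. Let $N$ be an arbitrary closed submodule of $M$. By assumption $N$ has a weak supplement $L$, i.e. a submodule with $N+L=M$ and $N\cap L\ll M$. Since $N\cap L$ is small in $M$, the reduction above gives $N\cap L\subseteq Rad(M)=0$, so $N\cap L=0$. Combining $N+L=M$ with $N\cap L=0$ yields the internal direct sum $M=N\bigoplus L$, so $N$ is a direct summand of $M$. As $N$ was an arbitrary closed submodule, every closed submodule of $M$ is a direct summand, which is precisely the statement that $M$ is extending.

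I do not expect a serious obstacle here; once the hypothesis is read correctly (every closed submodule admits a weak supplement), the conclusion is essentially forced. The one point deserving care, and the genuine content of the lemma, is the implication $S\ll M\Rightarrow S\subseteq Rad(M)$ together with its consequence under $Rad(M)=0$: this is the step that lets the "$N\cap L$ small" clause of a weak supplement be strengthened to "$N\cap L=0$," without which the weak supplement $L$ need not be a complement and $N$ need not split off.
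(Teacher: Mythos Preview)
Your argument is correct and is, in fact, exactly the standard proof of this lemma: the hypothesis $Rad(M)=0$ kills every small submodule, so the weak-supplement intersection $N\cap L\ll M$ collapses to $N\cap L=0$, turning the weak supplement into a direct complement and showing every closed submodule is a summand.

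Note, however, that the paper does not actually supply a proof of this lemma; it is simply quoted from reference~[12] (Qing-yi and Mei-hua) and invoked as a black box in the proof of Theorem~3.16. So there is no ``paper's own proof'' to compare against here---your write-up fills in precisely the argument that the cited source contains.
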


\begin{thm}
 For any ring $R$ the following are equivalent:\\

1.  Every left $R$-module is a lifting.\\

2.  Every left $R$-module is extending.
\end{thm}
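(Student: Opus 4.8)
The plan is to prove the equivalence by showing that each of the two conditions forces $R$ to have the same internal structure --- namely that $R$ is an Artinian serial (Nakayama) ring whose indecomposable modules are uniserial --- and then to read off the opposite condition from that structure. Because the lifting and extending conditions are formally dual to one another, I expect the two implications to be mirror images, so I would set the argument up symmetrically and carry out one direction in full, indicating how the dual runs.

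For $(1)\Rightarrow(2)$ I would first extract finiteness from the hypothesis. Since a lifting module is in particular amply supplemented, the assumption makes every left $R$-module supplemented, so by Theorem 2.7 the ring $R$ is left perfect. The central structural claim is then that the full lifting hypothesis forces every left $R$-module to decompose as a direct sum of hollow modules; over this (in particular semiperfect) ring those hollow summands are uniserial, and the linear ordering of their submodules shows that $R$ itself is serial, hence Artinian serial. Once $R$ is identified in this way, every left module is a direct sum of uniserials, and a uniserial module is simultaneously hollow and uniform. Being uniform, each such summand is extending, and the serial structure of $R$ is precisely what guarantees that the (possibly infinite) direct sum of these uniform pieces is again extending. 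This yields condition $(2)$.

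The converse $(2)\Rightarrow(1)$ runs by the dual argument: the extending hypothesis forces $R$ to be left Artinian with every module a direct sum of uniform modules; these uniform summands are uniserial and therefore also hollow, so each is lifting, and the serial structure of $R$ makes the direct sum lifting. Conceptually the symmetry between the two directions reflects the duality between the lifting and the extending conditions, which is what makes the common ring-theoretic characterization self-dual.

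I expect the main obstacle to be exactly the bridge between the module-theoretic hypothesis and the serial structure of $R$: proving that \emph{every module lifting} (respectively \emph{every module extending}) compels every module to be a direct sum of uniserials. This is the heart of the theorem, and it rests on the nontrivial fact that arbitrary direct sums of hollow (respectively uniform) modules are lifting (respectively extending) only over rings of this restricted type. In particular the Artinian conclusion must be secured first, since without it one has neither the decomposition into indecomposables nor the stability of the lifting or extending property under infinite direct sums.
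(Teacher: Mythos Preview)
The paper states Theorem~3.15 without proof and without citation; it is simply listed between Lemma~3.14 (quoted from [12]) and Theorem~3.16, and is then invoked implicitly in what follows. Consequently there is no argument in the paper against which your proposal can be compared.

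That said, your outline is the standard route to this equivalence: one shows that either hypothesis forces $R$ to be (left) Artinian serial, so that every module decomposes as a direct sum of uniserial modules, and then observes that uniserial modules are simultaneously uniform and hollow, hence both extending and lifting. Two remarks on the sketch itself. First, the passage from ``every module is lifting'' (or extending) to ``$R$ is Artinian serial'' is precisely the hard part, as you acknowledge; your plan does not yet indicate how you intend to establish this, and the paper offers no help here, so in a self-contained write-up you would need either to supply the argument or to cite the relevant theorems of Harada and Oshiro. Second, the step ``a direct sum of extending (respectively lifting) modules is extending (respectively lifting)'' is false in general and is exactly where the serial hypothesis on $R$ must be used; be careful to invoke the structure of $R$ at that point rather than appealing to a closure property that does not hold unconditionally.
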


\begin{thm}
Let $M$ be an $Rad$-$D_{11}$-module with the following conditions:\\

1. $M$ is $UC$-module;\\

2. $Rad$($M$)=0;\\

3. Every nonsingular right $D_{11}$-module is projective;\\

 then $M$ is a $C$-$Rad$-$D_{11}$-module.
\end{thm}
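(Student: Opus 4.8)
The plan is to reduce the statement to Theorem 3.13: I shall show that hypotheses (1) and (2) force $M$ to be a $UC$-extending module, after which Theorem 3.13, applied to the $Rad$-$D_{11}$-module $M$, yields the conclusion at once.

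First I would extract a weak-supplement structure from the $Rad$-$D_{11}$ hypothesis. Let $N$ be any submodule of $M$, and let $L$ be a $Rad$-supplement of $N$ that is a direct summand, so that $N+L=M$ and $N\cap L\ll Rad(L)$. Since $Rad(L)\leq L\leq M$ and smallness is inherited upward (if $A\ll B$ and $B\leq M$, then $A\ll M$), we obtain $N\cap L\ll M$. Hence $L$ is a weak supplement of $N$; as this holds for every submodule, in particular every closed submodule has a weak supplement, so $M$ is closed weak supplemented.

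Next I would invoke hypothesis (2). Because $Rad(M)=0$ and $M$ is closed weak supplemented, Lemma 3.14 applies and yields that $M$ is extending. Combined with hypothesis (1), that $M$ is a $UC$-module, this makes $M$ a $UC$-extending module, and, as recorded just before Theorem 3.13, every such module has the $D_{3}$ property.

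Finally I would apply Theorem 3.13 to the $UC$-extending module $M$: since $M$ is a $Rad$-$D_{11}$-module by hypothesis, the stated equivalence gives that $M$ is a $C$-$Rad$-$D_{11}$-module. I note that hypothesis (3) plays only a corroborative part: since $Rad(M)=0$ forces $N\cap L\ll Rad(L)=0$ for each $Rad$-supplement $L$, every submodule of $M$ is already a direct complement, so $M$ is in fact semisimple, and (3) then identifies $M$, when non-singular, as projective. This semisimplicity even offers a shortcut, for it yields the $SSP$ property at once and Lemma 3.2 then applies; but the route through Theorem 3.13 is preferable, since it genuinely uses the $UC$ hypothesis. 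The main obstacle is precisely the passage from $Rad$-$D_{11}$ to extending: everything hinges on verifying that the $Rad$-supplements furnished by the hypothesis really serve as weak supplements, so that Lemma 3.14 is legitimately available.
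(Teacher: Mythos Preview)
Your argument is correct and follows the same overall architecture as the paper's: show that $M$ is closed weak supplemented, apply Lemma~3.14 together with $Rad(M)=0$ to conclude that $M$ is extending, combine this with the $UC$ hypothesis, and finish via Theorem~3.13. The difference is in the first step. The paper derives closed weak supplementation from hypothesis~(3): it takes $M$ nonsingular, notes that for a closed submodule $N$ the quotient $M/N$ is again nonsingular, uses~(3) to make the relevant module projective, and concludes that $N$ is a direct summand; this feeds directly into the citation of~[12]. You instead read weak supplements straight off the $Rad$-$D_{11}$ hypothesis, which works and renders hypothesis~(3) superfluous --- and your further remark that $Rad(M)=0$ forces $Rad(L)=0$ for each direct-summand $Rad$-supplement $L$, hence $N\cap L=0$ and $M$ is semisimple, is also correct and gives an even shorter path through Lemma~3.2. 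So your proof is valid but more economical than the paper's; the paper's version is the one that actually exercises condition~(3).
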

\begin{proof}
Let $M$ be a nonsingular module and $N$ a closed submodule of $M$. Then ($M$/$K$) is nonsingular. Since $M$ is a projective then $N$ is a direct summand of $M$. From [12], $M$ is closed weak supplemented with $Rad$($M$)=0 implies that $M$ is extending module. Now from condition (1) with $Rad$-$D_{11}$-module we obtain $M$ is a $C$-$Rad$-$D_{11}$-module. (see Theorem 3.13).
\end{proof}

\begin{thm}
Let $M$ be an $R$-module. If $M$ satisfies the following conditions:\\

1.  $M$ is $UC$-module;\\

2. $M$ is $Rad$-$D_{11}$-module;\\

3. $M$ is lifting module;\\

then $M$ is a $C$-$Rad$-$D_{11}$-module.
\end{thm}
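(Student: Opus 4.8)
The plan is to funnel everything into Theorem 3.13, which states that a $UC$-extending module is a $Rad$-$D_{11}$-module if and only if it is a $C$-$Rad$-$D_{11}$-module. Hypothesis (1) already supplies the $UC$ property and hypothesis (2) supplies the $Rad$-$D_{11}$ property, so the only thing standing between the assumptions and the conclusion is the extending property. Thus the entire proof reduces to upgrading the lifting hypothesis (3) to ``extending,'' after which Theorem 3.13 applies verbatim and, since a $UC$-extending module has the $D_3$ property (the remark preceding Theorem 3.13), delivers the $C$-$Rad$-$D_{11}$ conclusion.

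To produce the extending property I would appeal to the lifting/extending duality recorded in Theorem 3.17. Concretely, given a closed submodule $N$ of $M$, I would apply the $D_1$ (lifting) decomposition to $N$, obtaining $M = H \oplus G$ with $H \leq N$ and $N \cap G \ll G$; the modular law then gives $N = H \oplus (N \cap G)$. The goal is to kill the small piece $N \cap G$. Since $G$ is a direct summand of $M$, the relation $N \cap G \ll G$ forces $N \cap G \ll M$, and combining this with the fact that $N \cap G$ is a direct summand of the closed module $N$ should force $N \cap G = 0$, so that $N = H$ is a direct summand. This is exactly where the $UC$ hypothesis enters: uniqueness of closures pins the closure of $H$ to $H$ itself and identifies it with the closed submodule $N$.

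I expect the interaction between ``small'' and ``closed'' submodules to be the main obstacle, together with the logical status of Theorem 3.17, which is phrased as an equivalence over \emph{all} $R$-modules rather than for a single $M$. The cleanest way around both difficulties is to bypass ``extending'' altogether and argue straight from Definition 2.1: a direct summand of a lifting module is again lifting, and every lifting module is a $D_{11}$-module, because the summand $G$ above satisfies $N + G = M$ with $N \cap G \ll G$, i.e.\ $G$ is a supplement of $N$ that is a direct summand; since a small submodule lies in the radical, this supplement is in particular a $Rad$-supplement, so each summand is $Rad$-$D_{11}$. Hence every direct summand of $M$ is $Rad$-$D_{11}$, which is precisely the definition of a $C$-$Rad$-$D_{11}$-module. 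Either way---via Theorem 3.13 once extending is secured, or directly through the lifting-to-$D_{11}$ implication---the conclusion follows.
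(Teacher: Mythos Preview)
The paper actually supplies no proof for this theorem; it is stated and immediately followed by corollaries. Judging from the surrounding material---in particular the proof of Theorem 3.16, which first manufactures the extending property and then invokes Theorem 3.13---the intended argument is precisely your first route: pass from lifting to extending, then feed $UC$ + extending + $Rad$-$D_{11}$ into Theorem 3.13. So your instinct to funnel everything through Theorem 3.13 matches the paper's evident plan. One numbering slip: the lifting/extending equivalence you cite as ``Theorem 3.17'' is Theorem 3.15; Theorem 3.17 is the very statement you are proving.

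Your bypass argument is not only correct but cleaner than anything the paper offers. The chain ``direct summands of lifting modules are lifting $\Rightarrow$ every lifting module is $D_{11}$ (the summand $G$ in the $D_1$ decomposition is a supplement that is a direct summand) $\Rightarrow$ every $D_{11}$-module is $Rad$-$D_{11}$'' is valid, and it shows that hypothesis (3) alone already forces every direct summand of $M$ to be $Rad$-$D_{11}$; hypotheses (1) and (2) are then redundant. This sidesteps both obstacles you flagged: the global-versus-local mismatch in Theorem 3.15, and the delicate small-versus-closed interaction in your direct attempt to derive extending (where, as you suspected, getting $N\cap G=0$ from closedness of $N$ and smallness of $N\cap G$ is not immediate). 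The paper's implicit route, by contrast, leans on extracting ``$M$ lifting $\Rightarrow$ $M$ extending'' from Theorem 3.15, which is phrased for \emph{all} $R$-modules and does not literally deliver the single-module implication.
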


\begin{cor}
Let $M$ be an $R$-module. If $M$ is a local then $M$ is a $C$-$Rad$-$D_{11}$-module.
\end{cor}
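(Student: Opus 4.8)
The plan is to reduce the statement to a single fact: a local module $M$ is indecomposable, so its only direct summands are $0$ and $M$, and hence Definition 2.1 asks only that $M$ itself be a $Rad$-$D_{11}$-module.

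First I would record the structural consequences of locality, exactly as in the proof of Theorem \ref{THM2.4}: the largest proper submodule of $M$ is $Rad(M)$, every proper submodule $A$ satisfies $A\leq Rad(M)\ll M$, and $M$ is therefore hollow. From this I would deduce indecomposability: a nontrivial splitting $M=A\oplus B$ would force both $A$ and $B$ to be proper, whence $A,B\leq Rad(M)$ and so $M=A+B\leq Rad(M)$, contradicting that $Rad(M)$ is proper. Thus the only direct summands of $M$ are $0$ and $M$.

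Next I would check that $M$ is a $Rad$-$D_{11}$-module by exhibiting, for each submodule $N\leq M$, a $Rad$-supplement that is a direct summand. If $N=M$, the summand $0$ suffices, since $M+0=M$ and $M\cap 0=0\leq Rad(0)$. If $N$ is proper, the summand $M$ itself suffices: $N+M=M$ trivially, and $N\cap M=N\leq Rad(M)$, which is precisely the $Rad$-supplement condition $N\cap M\leq Rad(M)$. In both cases the $Rad$-supplement produced ($0$ or $M$) is a direct summand, so $M$ is $Rad$-$D_{11}$. Since the only summands of $M$ are $0$ (trivially $Rad$-$D_{11}$) and $M$, every direct summand of $M$ is $Rad$-$D_{11}$, and Definition 2.1 then gives that $M$ is a $C$-$Rad$-$D_{11}$-module.

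The step I regard as the crux rather than a genuine obstacle is the verification in the third paragraph: it rests on reading the $Rad$-supplement condition as the containment $N\cap L\leq Rad(L)$ (as it is used in the proof of Theorem 2.6) together with the defining feature of a local module that every proper submodule lies inside $Rad(M)$. That single containment is what allows $M$ to serve as a $Rad$-supplement for all proper $N$ simultaneously, and it is the only place where locality is used beyond securing indecomposability.
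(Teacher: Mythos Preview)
Your argument is correct. The paper states Corollary 3.18 without proof; from its placement it is presumably meant to follow from Theorem 3.17 (local $\Rightarrow$ hollow $\Rightarrow$ lifting, and then one would still need the $UC$ and $Rad$-$D_{11}$ hypotheses), but no details are given. Your route is different and in fact cleaner: you bypass Theorem 3.17 entirely and argue directly that a local module is indecomposable, so Definition 2.1 reduces to checking that $M$ itself is $Rad$-$D_{11}$, which you verify by hand using only the containment $N\leq Rad(M)$ for proper $N$. This is more elementary and self-contained than invoking the $UC$/lifting machinery, and it also makes transparent why no projectivity or $D_2$ assumption (as in Theorem \ref{THM2.4}) is needed here. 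Your caveat about reading the $Rad$-supplement condition as $N\cap L\leq Rad(L)$ rather than $N\cap L\ll Rad(L)$ is well placed; the paper's own usage in the proof of Theorem 2.6 confirms that this is the intended reading.
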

\begin{cor}
Let $M$ be an $R$-module such that every direct summand of $M$ is a finite direct sum of hollow modules. If $M$ has $D_{3}$ property then $M$ is a $C$-$Rad$-$D_{11}$-module.
\end{cor}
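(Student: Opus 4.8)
The plan is to reduce the statement to the principle, used repeatedly in Section~2 and made explicit around Theorem~2.11, that a $Rad$-$D_{11}$-module possessing the $D_{3}$ property is already a $C$-$Rad$-$D_{11}$-module. Under this reading the genuine content of the corollary is to extract the $Rad$-$D_{11}$ structure from the hollow-decomposition hypothesis; the $D_{3}$ hypothesis then does the work of pushing that structure down to every direct summand, exactly as required by Definition~2.1.

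First I would show that $M$ itself is a $Rad$-$D_{11}$-module. Since $M$ is a direct summand of itself, the hypothesis furnishes a decomposition $M = H_{1}\bigoplus\cdots\bigoplus H_{n}$ with each $H_{i}$ hollow. Each hollow $H_{i}$ is supplemented: in a hollow module every proper submodule is small, so $H_{i}$ serves as a supplement of any of its proper submodules, and the smallness forces the intersection into $\operatorname{Rad}(H_{i})$, making it a $Rad$-supplement. Because a finite direct sum of supplemented modules is again supplemented, $M$ is supplemented, hence amply supplemented. I would then argue that any submodule $N$ of $M$ admits a supplement of the form $\bigoplus_{i\in J}H_{i}$ for a suitable $J\subseteq\{1,\dots,n\}$, which is automatically a direct summand, and that $N\cap\bigl(\bigoplus_{i\in J}H_{i}\bigr)\subseteq\operatorname{Rad}\bigl(\bigoplus_{i\in J}H_{i}\bigr)$. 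This exhibits a $Rad$-supplement that is a direct summand for every submodule, i.e.\ $M$ is a $Rad$-$D_{11}$-module.

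Finally I would invoke the $D_{3}$ property to conclude. If $M_{1},M_{2}$ are direct summands with $M=M_{1}+M_{2}$, then $M_{1}\cap M_{2}$ is again a direct summand, which is precisely the closure condition needed to transfer the $Rad$-$D_{11}$ property from $M$ to an arbitrary direct summand of $M$ (each such summand being itself a finite direct sum of hollow modules, the construction of the previous paragraph applies verbatim to it). Combining this with the $Rad$-$D_{11}$ structure already obtained, Definition~2.1 yields that every direct summand of $M$ is a $Rad$-$D_{11}$-module, so $M$ is a $C$-$Rad$-$D_{11}$-module.

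The step I expect to be the main obstacle is the middle one: establishing that a submodule of a finite direct sum of hollow modules can always be supplemented by a sub-sum $\bigoplus_{i\in J}H_{i}$ of the hollow factors. This is an exchange/refinement-type argument that really uses the indecomposable, small-radical nature of hollow modules, rather than the soft inheritance of ``supplemented'' through summands; once it is in place, the remainder is routine bookkeeping through Definition~2.1 and the $D_{3}$ principle.
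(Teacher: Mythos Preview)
The paper states this corollary without an accompanying proof, so there is no explicit argument to compare against. Your proposal is consistent with the paper's governing principle---made explicit around Theorem~2.6 and in the remark labelled (*)---that a $Rad$-$D_{11}$-module with the $D_{3}$ property is a $C$-$Rad$-$D_{11}$-module, and it correctly isolates the substantive work as showing that a finite direct sum of hollow modules is $D_{11}$ (hence $Rad$-$D_{11}$); this is indeed a known fact of the Harmanci--Keskin--Smith type (reference [3] in the paper), proved by the inductive exchange argument you outline.

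One remark: your third paragraph actually contains two independent routes to the conclusion. The parenthetical observation ``each such summand being itself a finite direct sum of hollow modules, the construction of the previous paragraph applies verbatim to it'' already finishes the proof \emph{without} invoking $D_{3}$ at all: by hypothesis every direct summand $A$ of $M$ is a finite direct sum of hollows, so by your Step~2 each such $A$ is $Rad$-$D_{11}$, and Definition~2.1 gives $C$-$Rad$-$D_{11}$ immediately. The $D_{3}$ hypothesis is therefore redundant under the strong form of the hypothesis as stated; it would only be needed if the assumption were weakened to ``$M$ (rather than every summand) is a finite direct sum of hollow modules''. The paper presumably intends the $D_{3}$ route, in keeping with its Section~2 template, but your direct argument is cleaner and shows slightly more.
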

\begin{cor}
Any duo module $M$ has $C3$ property is $C$-$Rad$-$D_{11}$-module
\end{cor}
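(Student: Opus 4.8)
The plan is to realize this statement as an immediate consequence of the two SIP-based results established earlier in this section, so that essentially no new argument is required beyond chaining the hypotheses correctly. The strategy is to first extract the $SIP$ property from the duo hypothesis, then feed that property together with the assumed $C_{3}$ property into Theorem 3.7.

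Concretely, I would proceed as follows. First, since $M$ is assumed to be a duo module, I would invoke Lemma 3.15 (that is, [2, Lemma 3.4]), which asserts that every duo module has the $SIP$ property. This immediately gives that $M$ satisfies $SIP$, so the intersection of any two direct summands of $M$ is again a direct summand. Second, the statement hypothesizes that $M$ has the $C_{3}$ property. Thus $M$ is a $C_{3}$-module which has the $SIP$ property, which is precisely the hypothesis of Theorem 3.7. Applying Theorem 3.7 then yields that $M$ is a $C$-$Rad$-$D_{11}$-module, completing the argument.

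The only point requiring care is verifying that the two invoked results combine cleanly, namely that the $SIP$ furnished by Lemma 3.15 is exactly the property consumed by Theorem 3.7, and that no additional structural assumption (such as projectivity) is smuggled in along the way. Since Theorem 3.7 requires only the conjunction of the $C_{3}$ and $SIP$ properties, and both are now in hand, I expect no genuine obstacle: the corollary is a direct specialization in which the duo hypothesis plays the sole role of guaranteeing $SIP$ via Lemma 3.15. For completeness I would remark that, inside the proof of Theorem 3.7, the $SIP$ and $C_{3}$ properties together force the sum of any two direct summands to be a direct summand, so $M$ in fact also acquires the $SSP$ property, and the conclusion then follows through Lemma 3.2 exactly as in that theorem.
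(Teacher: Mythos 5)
Your proposal is correct and is evidently the intended derivation: the paper states this corollary without any proof, and your chaining of the duo-implies-$SIP$ lemma (which is [2, Lemma 3.4], numbered 3.12 in the paper rather than 3.15) with Theorem 3.7 is exactly the route the placement of the corollary suggests. The only caveat, inherited from the paper rather than introduced by you, is that Theorem 3.7's own proof ultimately routes through Lemma 3.2, which formally presupposes that $M$ is a $Rad$-$D_{11}$-module --- a hypothesis absent from both Theorem 3.7 and this corollary as stated.
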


\begin{rem}

We are now ready to prove the main result of this section, Theorem 3.7. The classification of $Rad$-$D_{11}$-module is very important in the process of generalization of supplemented module:
\end{rem}

Let $S$=$\{$$s_{1}$, \dots,$s_{n}$$\}$$\subset$ $M$ be a set of generators for $M$ over $End_{R}$($M$). Since a direct sum of semisimple modules is also a sum of simple modules then it is semisimple. So every direct sum of semisimple modules is again semisimple. Hence $M^{S}$ is a semisimple module. Moreover; we have $R$-homomorphism $R$ $\rightarrow$ $M^{S}$ and $r$ $\mapsto$($rs_{1}$, \dots,$rs_{n}$) is injective: Suppose that $rs_{i}$=0 for all generators of  $R$ as an $End_{R}$ $M$-module. \\

\noindent Therefore we can write every\\

\hspace{22 mm} $x\in M$ as $\beta_{1}$($s_{1}$)+ \dots+$\beta_{n}$($s_{n}$) for $\beta_{i}\in End_{R}M$. \\

\noindent So we have,

  $$rx=r(\beta_{1}(s_{1})+ \cdots +\beta_{n}(s_{n}))=\beta_{1}(rs_{1})+ \cdots+\beta_{n}(rs_{n})=0.$$

  \noindent Also we see that $rx=0 \ \forall \ x \in M$. By the faithfulness of $M$, we conclude that $r=0$. This shows that $R$ is (as an $R$-module) isomorphic to a submodule of $M^{S}$. Hence $R$ is a semisimple and by Lemma 3.5 we obtain $M$ has $SSP$ property. But $M$ is $Rad$-$D_{11}$-module. Thus $M$ is a $C$-$Rad$-$D_{11}$-module. Theorem 3.7 is thus proved.

\section{Conclusion}
The supplemented module is very important in module theory specially when we study the generalization of supplemented module. In addition we obtained the third generalization of this module by use many concepts as injective module; semiperfect module and $D_{i}$-modules. Also we found if $M$ is a $Rad$-$D_{11}$-module with $D_{2}$ property gives $C$-$Rad$-$D_{11}$-module. Moreover; if $M$ is a $C_{3}$-module having the $SIP$ and $C_{3}$ properties lead to $M$ is a $C$-$Rad$-$D_{11}$-module.

\section{Acknowledgement}
The authors would like to acknowledge the financial support received from Universiti Kebangsaan Malaysia under the research Grant UKM-DLP-2013-020. The authors also wish to gratefully acknowledge all those who have generously given of their time to referee our paper.\\

REFERENCES\\

  [1] N. Orhan, D.T. Tutuncu and R. Tribak, On hollow-lifting modules, Taiwanese J. Math., 11, no. 2, (2007) 545-568.\\

  [2] Y. Talebi and A. Mahmoudi, On $Rad$-$\bigoplus$-Supplemented Modules, Thai Journal of Mathematics, 9, no. 2, (2011) 373-381.\\

  [3] A. Harmanci, D. Keskin and P.F. Smith, On $\bigoplus$-supplemented modules, Acta Math. Hungar., 83, (1991) 161-169.\\

  [4] N. Orhan and D.T. Tutuncu, Generalization of Weak Lifting Modules, Soochow. J of Math., 32, no. 1, (2006) 71-76.\\

  [5] M. Alkan and A. Harmanci, On Summand Sum and Summand Intersection Property of Modules, Turk J Math., 26, (2002) 131- 147.\\

  [6] B. Talaee, Generalization of $D_{11}$ and $D^{+}$$_{11}$ Modules. Tarbiat Moallem University, $20^{th}$ Seminar on Algebra, Ordibehesht. 1388, no. 2-3, (2009) 213-216.\\

 [7] N.O. Ertas, (*)-generalized projective module and lifting modules, International Mathematical Forum, 5, no. 2, (2010) 59–68.\\

 [8] Y. Wang and N. Ding, Generalized supplemented modules. Taiwanese. Journal Mathematics, 20, no. 6, (2006) 1589–1601.

  [9] R. Wisbauer, Foundations of Module and Ring Theory, Gordon and Breach, Reading, Philadelphia, (1991).\\

  [10] M. Alkan and A. Harmanci, On Summand Sum and Summand Intersection Property of Modules, Turk. J. Math., 26, (2002) 131- 147.\\

[11] A.C. Ozcan and A. Harmanci, Duo Modules, Glasgow Math. J., 48, (2006) 533-545.\\

  [12] Z. Qing-yi and S. Mei-hua, On closed weak supplemented modules, Journal of Zhejiang University Science, 7, no. 2, (2006) 210-215.

\bibliographystyle{amsplain,latexsym}

\end{document}